\newtheorem{theorem}{Theorem}[section]
\newtheorem{proposition}[theorem]{Proposition}
\newtheorem{corollary}[theorem]{Corollary}
\newtheorem{remark}[theorem]{Remark}
\numberwithin{equation}{section}
\DeclareMathOperator{\NLRM}{NLRM}
\DeclareMathOperator{\ls}{ls}
\DeclareMathOperator{\wt}{wt}
\DeclareMathOperator{\GF}{GF}
\newcommand{\Ppp}{{\mathbb P}}
\newcommand{\qchoose}[3]{\genfrac{[}{]}{0pt}{}{#1}{#2}_{#3}}
\newcommand{\vanish}[1]{}
\newcommand{\rg}[2]{RG(#1,#2)}
\newcommand{\join}{\vee}
\newcommand{\plusdots}{+ \cdots +}
\newcommand{\timesdots}{\times \cdots \times}
\newcommand{\bigcupdotalt}{\charfusion[\mathop]{\bigcup}{\cdot}}
\def\moverlay{\mathpalette\mov@rlay}
\def\mov@rlay#1#2{\leavevmode\vtop{%
   \baselineskip\z@skip \lineskiplimit-\maxdimen
   \ialign{\hfil$\m@th#1##$\hfil\cr#2\crcr}}}
\newcommand{\charfusion}[3][\mathord]{
    #1{\ifx#1\mathop\vphantom{#2}\fi
        \mathpalette\mov@rlay{#2\cr#3}
      }
    \ifx#1\mathop\expandafter\displaylimits\fi}
\begin{document}

\title{$q$-Stirling identities revisited}

\author{{\sc Yue CAI},
          \hspace*{2 mm} {\sc Richard EHRENBORG}\hspace*{2 mm} and \hspace*{2 mm}
        {\sc Margaret A.\ READDY}}

\date{}
\maketitle

\begin{abstract}
We give combinatorial proofs of $q$-Stirling identities
using restricted growth words. 
This includes a poset theoretic proof of Carlitz's identity,
a new proof of the $q$-Frobenius identity of Garsia and Remmel
and of Ehrenborg's Hankel $q$-Stirling determinantal identity.
We also develop a two parameter generalization to unify 
identities of Mercier and include a symmetric function
version.
\end{abstract}

\section{Introduction}
\label{section_introduction}

           The classical Stirling number of the second kind $S(n,k)$
is the number of set partitions of $n$ elements into $k$ blocks.  The
Stirling numbers of the second kind first appeared in work of Stirling
in 1730, where he gave the Newton expansion of the functions $f(z) =
z^n$ in terms of the falling factorial basis~\cite[Page~8]{Stirling}.
Kaplansky and Riordan~\cite{Kaplansky_Riordan} found the combinatorial
interpretation that the Stirling number $S(n,k)$ enumerates the number
of ways to place $n-k$ non-attacking rooks on a triangular board of side
length $n-1$.  From later work of Garsia and Remmel, this is
equivalent to the number of set partitions of $n$ elements into $k$
blocks~\cite{Garsia_Remmel}.

The $q$-Stirling numbers of the second kind
arose from Carlitz's 
development of a $q$-analogue of
the Bernoulli numbers
and is predated by a problem of his involving
abelian groups~\cite{Carlitz,Carlitz_identity}.
There is a long history of studying set
partitions~\cite{Comtet,Garsia_Remmel,Leroux,Rota},
Stirling numbers of the second
kind and their $q$-analogues~\cite{Carlitz,
Ehrenborg_Readdy_juggling,
Gould, 
Milne_q_analog,
Wachs_White,White}.

In the literature there are 
many identities involving Stirling and
$q$-Stirling numbers of the second kind.
Stirling identities which appear in
Jordan's text~\cite{Jordan}
have been transformed to $q$-identities
by Ernst~\cite{Ernst}
using the theory of Hahn--Cigler--Carlitz--Johnson, Carlitz--Gould
and the Jackson $q$-derivative.
Verde-Star uses the divided difference operator~\cite{Verde-Star}
and the complete homogeneous symmetric polynomials
in the indeterminates
$x_k = 1 + q + \cdots + q^{k}$
in his work~\cite{Verde-Star_transformation}.

The goal of this paper is to give bijective proofs
of many of these  $q$-Stirling identities 
as well as a number of new identities.
Underlying these proofs is the 
theory of restricted growth words
which we review in the next section.
In 
Section~\ref{section_recurrence}
we discuss recurrence structured $q$-Stirling identities,
while in
Section~\ref{section_generating_function}
we focus on Gould's ordinary generating function
for the $q$-Stirling number.
A poset theoretic proof of Carlitz's identity
is given in Section~\ref{section_poset_Carlitz}.
Section~\ref{section_convolution} contains
combinatorial proofs of the de M\'edicis--Leroux 
$q$-Vandermonde convolutions.
We provide new proofs of Garsia and Remmel's
$q$-analogue of the Frobenius identity 
and
of Ehrenborg's Hankel $q$-Stirling
determinantal identity
in
Sections~\ref{section_Frobenius}
and~\ref{section_determinantal}.
In Section~\ref{section_another_identity_from_Carlitz}
we prove two identities of Carlitz
each using a sign-reversing involution on $RG$-words.
In the last section, 
we prove two parameter
$q$-Stirling identities, generalizing identities
of Mercier and include a symmetric function
reformulation.

\section{Preliminaries}
\label{section_preliminaries}

A word $w = w_{1} w_{2} \cdots w_{n}$ of length $n$ where
the entries are positive integers is called 
a {\em restricted growth word}, or 
{\em $RG$-word} for short, if
$w_{i}$ is bounded above by
$\max(0, w_{1}, w_{2}, \ldots, w_{i-1}) + 1$
for all indices $i$. 
This class of words was introduced by
Milne in the papers~\cite{Milne_restricted,Milne_q_analog}.
The set of $RG$-words of length $n$ where the largest entry is $k$ is
in bijective correspondence with set partitions of the 
set $\{1,2, \ldots, n\}$ into $k$ blocks.
Namely, if $w_{i} = w_{j}$, place the elements
$i$ and $j$ in the same
block of the partition.
To describe the inverse of this bijection,
write the partition
$\pi = B_{1} / B_{2} / \cdots / B_{k}$
in standard form, that is,
with $\min(B_{1}) < \min(B_{2}) < \cdots < \min(B_{k})$.
The associated $RG$-word is given by
$w = w_1 \cdots w_n$ where
$w_{i} = j$ if the entry $i$ 
appears in the $j$th block $B_{j}$ of $\pi$.

One way to obtain a $q$-analogue of Stirling numbers
of the second kind is to introduce a weight on
$RG$-words.
Let $\rg{n}{k}$ denote the set of all $RG$-words of length $n$ 
with maximal entry~$k$.
Observe that
$\rg{n}{k}$ is the empty set if $n < k$.
The set $\rg{n}{0}$ is also empty for $n > 0$
but the set $\rg{0}{0}$ is the singleton set
consisting of the empty word $\epsilon$.
Define the weight of $w = w_{1} w_{2} \cdots w_{n} \in \rg{n}{k}$
by
\begin{equation} 
    \wt(w) = q^{\sum_{i=1}^{n} (w_{i} - 1) - \binom{k}{2}}  . 
\end{equation} 
The {\em $q$-Stirling numbers of the second kind} are given
by
\begin{equation}
     S_{q}[n,k] = \sum_{w \in \rg{n}{k}} \wt(w).
\label{equation_rg_interpretation}
\end{equation}
See Cai and Readdy~\cite[Sections~2 and~3]{Cai_Readdy}.

This definition satisfies the
recurrence definition for $q$-Stirling numbers of the second kind
originally due to Carlitz; 
see~\cite[pages~128--129]{Carlitz} and~\cite[Section~3]{Carlitz_identity}:
\begin{equation*}
\label{equation_Stirling2_recurrence}
S_{q}[n,k] = S_{q}[n-1, k-1] + [k]_{q} \cdot S_{q}[n-1,k] 
           \:\: \mbox{ for } 1 \leq k \leq n,
\end{equation*}
where
$[k]_q = 1 + q + \cdots + q^{k-1}$.
To see this,
consider a word $w \in \rg{n}{k}$.
If the last letter is a left-to-right maxima then
the word $w$ is of the form $w = v k$ where
$v \in \rg{n-1}{k-1}$, yielding the first term of the recurrence.
Otherwise $w$ is of the form $w = v i$ where
$v \in \rg{n-1}{k}$ and $1 \leq i \leq k$,
which yields the second term of the recurrence.
The boundary conditions
$S_{q}[n,0] = \delta_{n,0}$ and $S_{q}[0,k] = \delta_{0,k}$
also follow from
the interpretation~\eqref{equation_rg_interpretation}.
For other weightings of $RG$-words which generate
the $q$-Stirling numbers of the second kind,
see~\cite{Milne_restricted_rank_row}
and~\cite{Wachs_White}.

For a word $w = w_{1} w_{2} \cdots w_{n}$
define the {\em length} of $w$ to be $\ell(w) = n$.
Similarly, define the {\em $\ls$-weight} of $w$
to be 
$\ls(w) = q^{\sum_{i=1}^{n} (w_{i}-1)}$.
This is a generalization of the
$ls$-statistic of $RG$-words~\cite[Section~2]{Wachs_White}.
The concatenation of two words $u$ and $v$ 
is denoted by $u \cdot v$.
The word~$v$ is a {\em factor} of the word $w$ if
one can write $w = u_{1} \cdot v \cdot v_{2}$.
A word
$v = v_{1} v_{2} \cdots v_{k}$ is a {\em subword} of~$w$ if there
is a sequence $1 \leq i_{1} < i_{2} < \cdots < i_{k} \leq n$
such that $w_{i_{j}} = v_{j}$ for all $1 \leq j \leq k$.
In other words, a factor of $w$ is a subword consisting of
consecutive entries.
For $S$ a set of positive integers, let $S^{k}$ denote
the set of all words of length $k$ with
entries in $S$.
Furthermore, let $S^{*}$ denote
the union
$S^{*} = \bigcupdotalt_{k \geq 0} S^{k}$,
that is,
the set of all words with
entries in $S$.
Observe that when $S$ is the empty set
then $S^{*}$ consists only of the empty word $\epsilon$.
Let $[j,k]$ denote the interval
$[j,k] = \{i \in \Ppp \: : \: j \leq i \leq k\}$.

Recall the $q$-Stirling numbers of the second kind
are specializations of
the homogeneous symmetric function $h_{n-k}$:
\begin{equation}
S_{q}[n,k] = h_{n-k}([1]_{q}, [2]_{q}, \ldots, [k]_{q}).
\label{equation_complete_symmetric_functions}
\end{equation}
See for instance~\cite[Chapter~I, Section~2, Example~11]{Macdonald}.
This follows directly by observing that a
word $w \in \rg{n}{k}$ has a unique expansion of the
form
\begin{equation}
w
=
1 \cdot u_{1} \cdot 2 \cdot u_{2} \cdots k \cdot u_{k} ,
\label{equation_expansion}
\end{equation}
where $u_{i}$ is a word in $[1,i]^{*}$.
By summing
over all words $u_{i}$ for
$i = 1, \ldots, k$
such that the sum of their lengths is
$\ell(u_{1}) + \ell(u_{2}) \plusdots \ell(u_{k}) = n-k$,
equation~\eqref{equation_complete_symmetric_functions}
follows.

\section{Recurrence related identities}
\label{section_recurrence}

In this section we focus on recurrence structured identities
for the
 $q$-Stirling numbers of the second kind.
The proofs we provide here use
the combinatorics of $RG$-words.

We begin with Mercier's identity~\cite[Theorem~3]{Mercier_q_identities}.
This is a $q$-analogue of Jordan~\cite[equation~9, page~187]{Jordan}. 
Mercier's original proof of
Theorem~\ref{theorem_Mercier_recurrence} 
was by induction.
Later a combinatorial proof using $0$-$1$ tableaux was given by 
de M\'edicis and Leroux~\cite{de_Medicis_Leroux_unified}. 
In the same paper, de M\'edicis and Leroux proved 
Theorems~\ref{theorem_de_Medicis_Leroux_recurrence} 
and \ref{theorem_de_Medicis_Leroux_recurrence_2} using 
$0$-$1$ tableaux.

\begin{theorem}[Mercier, 1990]
\label{theorem_Mercier_recurrence}
For nonnegative integers $n$ and $k$,
the following identity holds:
\begin{equation}
     S_{q}[n+1, k+1] = \sum_{m = k}^{n} \binom{n}{m} \cdot q^{m-k} 
                                    \cdot S_{q}[m,k]  .
\end{equation}
\end{theorem}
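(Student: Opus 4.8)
The plan is to prove the identity by a single weight-respecting bijection, after which everything follows by summing weights. Write $\binom{[1,n]}{m}$ for the collection of $m$-element subsets of $[1,n]$; since $\bigl|\binom{[1,n]}{m}\bigr| = \binom{n}{m}$ and $S_q[m,k] = \sum_{v \in \rg{m}{k}} \wt(v)$ by~\eqref{equation_rg_interpretation}, it suffices to exhibit a bijection
\[
\phi : \rg{n+1}{k+1} \;\longrightarrow\; \bigcupdotalt_{m=k}^{n} \rg{m}{k} \times \binom{[1,n]}{m}
\]
such that, whenever $\phi(w) = (v,A)$ and $m = |A|$, one has $\wt(w) = q^{m-k}\cdot\wt(v)$. (When $n < k$ both sides of the claimed identity vanish, so we may assume $n \geq k$.)

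First I would define $\phi$. Given $w = w_1 w_2 \cdots w_{n+1} \in \rg{n+1}{k+1}$, delete every letter equal to $1$ and subtract $1$ from each of the surviving letters; call the resulting word $v$, and let $m$ be the number of letters of $w$ exceeding $1$, so $\ell(v) = m$. Record the positions of the surviving letters as $A = \{\, i-1 \colon 2 \leq i \leq n+1,\ w_i \neq 1 \,\}$. Because $w_1 = 1$ for every $RG$-word, the index $1$ never occurs among the indices with $w_i \neq 1$, so $A$ is genuinely an $m$-subset of $[1,n]$; set $\phi(w) = (v,A)$. Next I would check that $\phi$ lands in the asserted set, i.e.\ $v \in \rg{m}{k}$ with $k \leq m \leq n$. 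We have $m \leq n$ since position $1$ is never counted. For the restricted growth condition: the first letter of $w$ that exceeds $1$ must equal $2$, since at that point the running maximum of $w$ is still $1$; hence $v_1 = 1$. For any later letter $a > 1$ of $w$, the running maximum of $w$ just before it equals the maximum of the earlier letters exceeding $1$ (the letter $1$ contributes nothing), so the bound on $a$ imposed by $w$ being an $RG$-word is exactly the restricted growth bound for the corresponding letter $a-1$ of $v$. Since the letter $k+1 \geq 2$ of $w$ survives and becomes $k$, while nothing larger does, $v$ has largest letter exactly $k$ and hence $m \geq k$; thus $(v,A)$ lies in the target.

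For the weights: the deleted $1$'s contribute $0$ to $\sum_i (w_i - 1)$, and if the surviving letters of $w$ are $a_1, \dots, a_m$ then $\sum_j (a_j - 1) = \sum_j v_j = \sum_j(v_j - 1) + m$. Combining this with $\binom{k+1}{2} - \binom{k}{2} = k$ gives $\wt(w) = q^{\sum_j(v_j-1) + m - \binom{k+1}{2}} = q^{m-k}\cdot\wt(v)$, as required. Finally, I would describe the inverse: given $v \in \rg{m}{k}$ and $A = \{a_1 < \dots < a_m\} \subseteq [1,n]$, form the word of length $n+1$ having letter $v_j + 1$ in position $a_j + 1$ for each $j$ and letter $1$ in every other position. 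The same two observations about running maxima (again using $v_1 = 1$ to handle the first surviving letter) show this word lies in $\rg{n+1}{k+1}$, and the two constructions are visibly mutually inverse. Summing $\wt(w) = q^{m-k}\wt(v)$ over $w \in \rg{n+1}{k+1}$, equivalently over all pairs $(v,A)$, and using $\bigl|\binom{[1,n]}{m}\bigr| = \binom{n}{m}$ together with~\eqref{equation_rg_interpretation} yields the identity.

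The only point that requires real care is the verification that the restricted growth condition is preserved in both directions. This works precisely because $w_1 = 1$ holds for every $RG$-word: that is what makes the position shift by one exactly right and keeps the running maxima of $w$ and of $v$ in lockstep, so that deleting the $1$'s and subtracting $1$ is an honest map onto $RG$-words of the smaller shape. I expect no other obstacle; in particular the ordinary (rather than $q$-) binomial coefficient arises naturally because the choice of the subset $A$ carries no weight — all of the $q$-weight is accounted for by the shift $\wt(w) = q^{m-k}\wt(v)$.
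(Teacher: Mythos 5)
Your proposal is correct and is essentially the paper's own argument: delete the letters equal to $1$, subtract $1$ from the survivors to land in $\rg{m}{k}$, observe the weight drops by exactly $q^{m-k}$, and count the $\binom{n}{m}$ ways to reinsert the ones (using that the first letter of an $RG$-word is always $1$). You simply make the reinsertion positions explicit as a subset $A\subseteq[1,n]$ and verify the preservation of the restricted growth condition in more detail than the paper does.
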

\begin{proof}
When $n < k$ there is nothing to prove.
For any word $w \in \rg{n+1}{k+1}$, 
suppose there are $m$ entries in $w$ that are not equal to one. 
Remove the $n+1-m$ entries equal to one in~$w$ 
and then subtract one from each of the remaining~$m$ entries 
to obtain a new word $u$. 
Observe $u \in \rg{m}{k}$ and
$\wt(w) = q^{m-k} \cdot \wt(u)$. 
Conversely, given a word $u \in \rg{m}{k}$,
one can first increase each of the $m$ entries by one
and then insert $n+1-m$ ones into the word to obtain
an $RG$-word $w \in \rg{n+1}{k+1}$.
There are $\binom{n}{n-m} = \binom{n}{m}$ ways
to insert the~$n+1-m$ ones since the first entry
in an $RG$-word must be one.
In other words, for any $u \in \rg{m}{k}$
we can obtain $\binom{n}{m}$ new $RG$-words in $\rg{n+1}{k+1}$
under the map described above, which gives the desired identity.
\end{proof}

Using similar ideas we also prove the following $q$-identity.
It is a $q$-analogue of a result  due to
Jordan~\cite[equation~7, page~187]{Jordan}.

\begin{theorem}
\label{theorem_q_Jordan}
For two non-negative integers $n$ and $m$,
the following identity holds:
\begin{equation}
q^{n-m} \cdot S_{q}[n,m]
=
\sum_{k=m}^{n}
(-1)^{n-k} \cdot \binom{n}{k} \cdot S_{q}[k+1,m+1] .
\label{equation_alternating_sum}
\end{equation}
\end{theorem}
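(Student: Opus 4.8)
The identity to be proven is an inversion of Theorem~\ref{theorem_Mercier_recurrence}. Indeed, Mercier's identity expresses $S_q[k+1,m+1]$ as a $\binom{\cdot}{\cdot}q^{\cdot}$-weighted sum of $S_q[j,m]$, and~\eqref{equation_alternating_sum} expresses $q^{n-m}S_q[n,m]$ as an alternating sum of the $S_q[k+1,m+1]$. So the cleanest route is to substitute Theorem~\ref{theorem_Mercier_recurrence} into the right-hand side of~\eqref{equation_alternating_sum} and collapse the resulting double sum using the standard binomial orthogonality $\sum_{k} (-1)^{n-k}\binom{n}{k}\binom{k}{j} = \delta_{n,j}$. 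Concretely, the plan is: replace $S_q[k+1,m+1]$ by $\sum_{j=m}^{k}\binom{k}{j}q^{j-m}S_q[j,m]$, swap the order of summation so the inner sum runs over $k$ from $j$ to $n$, extract the factor $q^{j-m}S_q[j,m]$, and observe that $\sum_{k=j}^{n}(-1)^{n-k}\binom{n}{k}\binom{k}{j}$ equals $\binom{n}{j}\sum_{k=j}^n (-1)^{n-k}\binom{n-j}{k-j} = \binom{n}{j}(1-1)^{n-j} = \delta_{n,j}$, using the subset-of-a-subset identity $\binom{n}{k}\binom{k}{j}=\binom{n}{j}\binom{n-j}{k-j}$. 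Only the $j=n$ term survives, leaving $q^{n-m}S_q[n,m]$, as desired.

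Alternatively, since the whole paper is built on combinatorics of $RG$-words, one can give a sign-reversing involution proof. Expand the right-hand side of~\eqref{equation_alternating_sum} combinatorially: a term indexed by $k$ contributes, with sign $(-1)^{n-k}$, a choice of a $k$-subset of $[n]$ together with a word in $\rg{k+1}{m+1}$. Using the ``remove the ones, shift down'' bijection from the proof of Theorem~\ref{theorem_Mercier_recurrence}, each such object corresponds to a triple consisting of a subset $T\subseteq[n]$ with $|T|=k$, a word $u\in\rg{j}{m}$ for some $j$, and a choice of which $k+1-j$ positions among $k+1$ receive a $1$; equivalently, after bookkeeping, one gets a word $u\in\rg{j}{m}$ together with a subset of $[n]$ of size between $j$ and $n$, weighted by $q^{j-m}\wt(u)$ and signed. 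On the collection of objects with $j<n$ one defines an involution by toggling membership of the largest element of $[n]$ not ``forced'' by $u$, which flips the sign and cancels everything except the $j=n$ objects; those are exactly the words $w\in\rg{n}{m}$ with the full subset $[n]$, contributing $q^{n-m}\wt(w)$ and summing to $q^{n-m}S_q[n,m]$.

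I expect the purely algebraic argument to go through with no obstacle beyond correctly tracking the $q$-powers: the factor $q^{m-k}$ in Mercier's identity becomes $q^{j-m}$ here after reindexing, and one must check these exponents do not interfere with the binomial collapse (they do not, since $q^{j-m}$ is independent of the summation variable $k$). The main subtlety in the involution approach is ensuring the involution is well-defined precisely on the non-surviving part and that it is genuinely sign-reversing and weight-preserving; identifying the correct ``pivot'' element to toggle — and checking it is never among the positions dictated by the structure of the $RG$-word $u$ — is the delicate point. Given the algebraic proof is short and self-contained once Theorem~\ref{theorem_Mercier_recurrence} is in hand, I would present that as the main proof, possibly remarking that a bijective proof via sign-reversing involution is also available in the spirit of Section~\ref{section_another_identity_from_Carlitz}.
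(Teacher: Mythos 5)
Your main (algebraic) argument is correct, but it takes a genuinely different route from the paper. You derive the identity formally from Theorem~\ref{theorem_Mercier_recurrence}: substituting $S_{q}[k+1,m+1]=\sum_{j=m}^{k}\binom{k}{j}q^{j-m}S_{q}[j,m]$, interchanging the sums, and collapsing $\sum_{k=j}^{n}(-1)^{n-k}\binom{n}{k}\binom{k}{j}=\delta_{j,n}$ via the subset-of-a-subset identity is a complete and correct proof, and the exponent $q^{j-m}$ indeed plays no role in the collapse; the edge cases $n<m$ and $n=m$ are handled uniformly. The paper instead argues directly on $RG$-words: it interprets $\binom{n}{k}\cdot S_{q}[k+1,m+1]$ as the weighted count of words in $\rg{n+1}{m+1}$ whose set of positions carrying the letter $1$ contains a prescribed $(n-k)$-subset of $\{2,\ldots,n+1\}$, applies inclusion--exclusion to isolate the words whose only $1$ is in the first position, and identifies those with $\rg{n}{m}$ shifted up by one (accounting for the factor $q^{n-m}$). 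The two proofs are two faces of the same coin --- binomial orthogonality is the algebraic form of inclusion--exclusion --- but yours makes explicit that the theorem is the binomial inverse of Mercier's identity (a point the paper only remarks on later, in Section~\ref{section_convolution}), at the cost of treating Theorem~\ref{theorem_Mercier_recurrence} as a black box, while the paper's version stays self-contained and combinatorial in keeping with its stated aims. Your secondary involution sketch is essentially the paper's inclusion--exclusion argument in disguise, but as written it is too vague to stand on its own (the ``pivot'' element and the ``bookkeeping'' are exactly where the work lies); since you present the algebraic argument as the main proof, this does not affect the verdict.
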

\begin{proof}
For a subset $A \subseteq \{2,3, \ldots, n+1\}$
observe that the sum over the weights of
$RG$-words in the set $\rg{n+1}{m+1}$
with ones in the set of positions containing the set $A$
is given by $S_{q}[n+1-|A|,m+1]$. 
Hence by inclusion-exclusion the right-hand side
of equation~\eqref{equation_alternating_sum}
is the sum of the weights of all words in
$\rg{n+1}{m+1}$ 
where the element $1$ only occurs in
first position.
This set of $RG$-words is also obtained by taking
a word in $\rg{n}{m}$, adding one to each entry,
which multiplies the weight by~$q^{n-m}$,
and concatenating it with a one on the left.
\end{proof}

Theorems~\ref{theorem_de_Medicis_Leroux_recurrence}
and
\ref{theorem_de_Medicis_Leroux_recurrence_2}
appear 
in~\cite[Propositions~2.3 and~2.5]{de_Medicis_Leroux_unified}.
We now give straightforward proofs of each result using
$RG$-words.
\begin{theorem}[de M\'edicis--Leroux, 1993]
\label{theorem_de_Medicis_Leroux_recurrence}
For nonnegative integers $n$ and $k$,
the following identity holds:
\begin{equation}
S_{q}[n+1, k+1] = \sum_{j = k} ^{n} [k+1]_{q}^{n-j} \cdot S_{q}[j,k].
\end{equation}
\end{theorem}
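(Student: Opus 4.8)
The plan is to decompose an $RG$-word in $\rg{n+1}{k+1}$ at the first occurrence of its maximal letter $k+1$. Using the canonical expansion~\eqref{equation_expansion}, any $w \in \rg{n+1}{k+1}$ factors uniquely as $w = v \cdot (k+1) \cdot u$, where $v = 1 \cdot u_{1} \cdot 2 \cdot u_{2} \cdots k \cdot u_{k}$ is the prefix of $w$ preceding its first entry equal to $k+1$ and $u = u_{k+1}$ is an arbitrary word over the alphabet $[1,k+1]$. Writing $j = \ell(v)$, we have $v \in \rg{j}{k}$ with $k \leq j \leq n$, and $\ell(u) = n-j$. Conversely every pair $(v,u)$ with $v \in \rg{j}{k}$ and $u \in [1,k+1]^{n-j}$, for $k \leq j \leq n$, arises this way, so this is a bijection.

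Next I would track the weight under this factorization. Since $\binom{k+1}{2} = \binom{k}{2} + k$ and the displayed letter $k+1$ contributes exactly $(k+1)-1 = k$ to the exponent $\sum_{i}(w_{i}-1)$, the two $k$'s cancel, giving $\wt(w) = \wt(v) \cdot q^{\sum_{c}(c-1)}$, where the last sum ranges over the letters $c$ of $u$. This is the key bookkeeping step, but it is a routine computation once the factorization is in place.

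Finally I would sum. For a fixed $j$, summing $q^{\sum_{c}(c-1)}$ over all words $u \in [1,k+1]^{n-j}$ factors over the $n-j$ positions, each contributing $\sum_{c=1}^{k+1} q^{c-1} = [k+1]_{q}$; hence the total over such $u$ is $[k+1]_{q}^{\,n-j}$. Summing $\wt(v)$ over $v \in \rg{j}{k}$ gives $S_{q}[j,k]$ by~\eqref{equation_rg_interpretation}. Combining and then summing over $j$ from $k$ to $n$ yields the stated identity; when $n < k$ both sides vanish, so there is nothing to prove. I do not anticipate a genuine obstacle: the only points requiring care are confirming that the canonical factorization gives the claimed bijection onto pairs $(v,u)$ and that the length bookkeeping $\ell(u) = n-j$ matches the exponent $n-j$ in $[k+1]_{q}^{\,n-j}$.
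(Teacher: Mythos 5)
Your proposal is correct and follows essentially the same route as the paper: both factor $w \in \rg{n+1}{k+1}$ at the first occurrence of the letter $k+1$ as $w = v \cdot (k+1) \cdot u$ with $v \in \rg{j}{k}$ and $u \in [1,k+1]^{n-j}$, and sum the weights. Your version just spells out the weight bookkeeping (the cancellation of the two $k$'s coming from $\binom{k+1}{2}=\binom{k}{2}+k$ and the letter $k+1$) that the paper leaves implicit.
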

\begin{proof}
Factor a word $w \in \rg{n+1}{k+1}$
as $w = x \cdot (k+1) \cdot y$
where $x \in \rg{j}{k}$ for some $j \geq k$
and $y$ belongs to $[1,k+1]^{*}$.
The factor $y$ has length $n-j$.
The sum of the weights of these words is
$[k+1]_{q}^{n-j} \cdot S_{q}[j,k]$.
The result follows by summing over all possible integers $j$.
\end{proof}

\begin{theorem}[de M\'edicis--Leroux, 1993]
\label{theorem_de_Medicis_Leroux_recurrence_2}
For nonnegative integers $n$ and $k$,
the following identity holds:
\begin{equation}
     (n-k)\cdot S_{q}[n,k] = \sum_{j = 1}^{n-k} 
                            S_{q}[n-j, k] \cdot 
                   ([1]_{q}^{j} + [2]_{q}^{j} + \cdots + [k]_{q}^{j}).
\end{equation}
\end{theorem}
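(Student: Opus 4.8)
The plan is to realize both sides as $\wt$-generating functions over families of objects built from $RG$-words, and to connect the two families by a single weight-preserving bijection. The starting point is the unique expansion $w = 1 \cdot u_{1} \cdot 2 \cdot u_{2} \cdots k \cdot u_{k}$ from~\eqref{equation_expansion}, in which $u_{i} \in [1,i]^{*}$ and $\ell(u_{1}) \plusdots \ell(u_{k}) = n - k$. Thus each $w \in \rg{n}{k}$ has exactly $n-k$ positions occupied by a letter of one of the factors $u_{1}, \ldots, u_{k}$, so the left-hand side $(n-k) \cdot S_{q}[n,k]$ is the $\wt$-generating function of all pairs $(w, p)$, where $w \in \rg{n}{k}$ and $p$ is such a position.

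Given such a pair $(w, p)$, the position $p$ lies in a unique factor $u_{i}$; say it is the $s$th letter of $u_{i}$, where $1 \leq s \leq g := \ell(u_{i})$. Write $u_{i} = u_{i}' \cdot v$ with $\ell(u_{i}') = s-1$ and $\ell(v) = j := g - s + 1 \geq 1$, and let $u$ be the word obtained from $w$ by replacing the factor $u_{i}$ with $u_{i}'$. Both $u_{i}'$ and $v$ are words in $[1,i]^{*}$, so~\eqref{equation_expansion} identifies $u$ as an element of $\rg{n-j}{k}$ and $v$ as an element of $[1,i]^{j}$; since the multiset of entries of $w$ is the disjoint union of those of $u$ and those of $v$, and $w$ and $u$ both have maximal entry $k$, we get $\wt(w) = \wt(u) \cdot \ls(v)$. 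I claim the map $(w, p) \mapsto (u, i, v)$ is a bijection onto the set of all triples $(u, i, v)$ with $j \geq 1$, $u \in \rg{n-j}{k}$, $i \in [1,k]$ and $v \in [1,i]^{j}$; the inverse takes such a triple, inserts $v$ at the right end of the factor $u_{i}$ of $u$, and marks the first letter of the inserted copy of $v$. Note that $j \leq n - k$ is forced, since $\ell(v) \leq g \leq n-k$.

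Summing $\wt$ over all these triples and collecting terms by the value of $j$ then yields
\begin{equation*}
(n-k) \cdot S_{q}[n,k]
=
\sum_{j=1}^{n-k} \sum_{u \in \rg{n-j}{k}} \sum_{i=1}^{k} \sum_{v \in [1,i]^{j}} \wt(u) \cdot \ls(v)
=
\sum_{j=1}^{n-k} S_{q}[n-j,k] \cdot \sum_{i=1}^{k} [i]_{q}^{j} ,
\end{equation*}
where I use $\sum_{v \in [1,i]^{j}} \ls(v) = (1 + q \plusdots q^{i-1})^{j} = [i]_{q}^{j}$. This is exactly the stated identity.

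I expect the only delicate point to be the verification that $(w, p) \mapsto (u, i, v)$ is a bijection: one must record the value $i$ as separate data, since it cannot be recovered from $u$ and $v$ alone — and this is precisely why the sum $\sum_{i=1}^{k} [i]_{q}^{j}$, rather than a single power, appears on the right — and one must check that truncating the factor $u_{i}$ at the marked letter is undone exactly by re-appending $v$ to the end of that factor. Everything else is the routine weight bookkeeping displayed above.
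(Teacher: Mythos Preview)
Your proof is correct and follows essentially the same approach as the paper. Both arguments interpret $(n-k)\cdot S_{q}[n,k]$ as the weighted count of pairs $(w,p)$ with $w\in\rg{n}{k}$ and $p$ a non-left-to-right-maximum position, and biject these with triples $(u,i,v)$ where $u\in\rg{n-j}{k}$, $i\in[1,k]$, $v\in[1,i]^{j}$; the only cosmetic difference is that the paper inserts the auxiliary word immediately \emph{after} the first occurrence of $i$ (so it becomes a prefix of the factor $u_{i}$), whereas you append it at the \emph{end} of that factor.
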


\begin{proof}
For a  word $w \in \rg{n}{k}$ consider
factorizations $w = x \cdot y \cdot z$
with the following two properties:
(1)
the rightmost letter of the factor $x$,
call this letter~$i$, 
 is a left-to-right maxima of $x$,
and
(2) the word $y$ is non-empty and all letters of $y$ are at most~$i$. 

We claim that the number of such factorizations of $w$ is $n-k$.
Let $s_{i}$ be the number of letters between the first occurrence
of $i$ and the first occurrence of $i+1$, and let $s_{k}$ be the
number of letters after the first occurrence of $k$.
For a particular $i$, we have $s_{i}$ choices for the word $y$.
But $\sum_{i=1}^{k} s_{i} = n-k$
since there are $n-k$ repeated letters in $w$.
This completes the claim.

Fix integers $1 \leq j \leq n-k$ and $1 \leq i \leq k$.
Given a word $u \in \rg{n-j}{k}$, we can factor it
uniquely as~$x \cdot z$, where the last letter of $x$
is the first occurrence of $i$ in the word $u$.
Pick $y$ to be any word of length $j$ with letters at most $i$.
Finally, let $w = x \cdot y \cdot z$.
Observe that this is a factorization satisfying
the conditions from the previous paragraph. 
Furthermore, we have $\wt(w) = \wt(u) \cdot \ls(y)$.
Summing over all words $u \in \rg{n}{k}$ 
and words $y \in [1,i]^{j}$
yields $S_{q}[n-j, k] \cdot [i]_{q}^{j}$.
Lastly, summing over all $i$ and $j$ gives the desired equality.
\end{proof}

\section{Gould's generating function}
\label{section_generating_function}

Gould~\cite[equation~(3.4)]{Gould} gave an analytic proof
for the ordinary generating function of the
$q$-Stirling numbers of the second kind.
Later Ernst~\cite[Theorem~3.22]{Ernst} gave a proof using
the orthogonality of the $q$-Stirling numbers of the first
and second kinds.
Wachs and White~\cite{Wachs_White} stated a 
$p,q$-version of this generating function without proof.
Here we
prove Gould's $q$-generating function using $RG$-words.

\begin{theorem}[Gould, 1961]
The $q$-Stirling numbers of the second kind $S_{q}[n,k]$ have 
the generating function
\begin{equation}
\label{equation_generating_function}
     \sum_{n \geq k} S_{q}[n,k] \cdot t^{n} 
     = \frac{t^k}{\prod_{i=1}^k (1-[i]_{q} \cdot t)}.
\end{equation}
\end{theorem}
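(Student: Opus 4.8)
The plan is to exploit the canonical factorization of $RG$-words given in equation~\eqref{equation_expansion}. Recall that every word $w \in \rg{n}{k}$ can be written uniquely as
$w = 1 \cdot u_{1} \cdot 2 \cdot u_{2} \cdots k \cdot u_{k}$,
where each $u_{i}$ is a word in $[1,i]^{*}$. Under this decomposition the letter contributing to the weight from the mandatory prefix letters $1, 2, \ldots, k$ is exactly $q^{\sum_{i=1}^{k}(i-1)} = q^{\binom{k}{2}}$, which cancels precisely the $q^{-\binom{k}{2}}$ in the definition of $\wt(w)$. What remains is that $\wt(w) = \prod_{i=1}^{k} \ls(u_{i})$, and the exponent of $t$ (tracking length) is $n = k + \sum_{i=1}^{k} \ell(u_{i})$.

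First I would record, for a fixed $i$, the generating function $\sum_{u \in [1,i]^{*}} \ls(u) \cdot t^{\ell(u)}$. A word in $[1,i]^{*}$ of length $\ell$ contributes $q^{(a_1 - 1) + \cdots + (a_\ell - 1)}$ summed over all choices $a_1,\ldots,a_\ell \in \{1,\ldots,i\}$, so the single-letter generating function is $(1 + q + \cdots + q^{i-1})\, t = [i]_{q} \cdot t$, and summing the geometric series over all lengths gives
$\sum_{u \in [1,i]^{*}} \ls(u) \cdot t^{\ell(u)} = \frac{1}{1 - [i]_{q} \cdot t}$.
Second, since the factorization is a bijection and the words $u_1, \ldots, u_k$ range independently over $[1,1]^{*} \times [1,2]^{*} \times \cdots \times [1,k]^{*}$, the generating function multiplies across the factors:
\begin{equation*}
\sum_{n \geq k} S_{q}[n,k] \cdot t^{n}
= \sum_{w} \wt(w) \cdot t^{\ell(w)}
= t^{k} \cdot \prod_{i=1}^{k} \left( \sum_{u \in [1,i]^{*}} \ls(u) \cdot t^{\ell(u)} \right)
= \frac{t^{k}}{\prod_{i=1}^{k} (1 - [i]_{q} \cdot t)},
\end{equation*}
where the leading $t^{k}$ accounts for the $k$ forced prefix letters. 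This is the claimed identity.

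There is no serious obstacle here; the whole argument is a routine unpacking of the decomposition already established in Section~\ref{section_preliminaries}. The only point requiring a word of care is the convergence/formal-power-series bookkeeping: one should either work in the ring of formal power series in $t$ over $\mathbb{Z}[q]$ (so the geometric series $\frac{1}{1-[i]_q t}$ is legitimate term by term), or note that for each fixed $n$ only finitely many $w$ contribute, so the coefficient of $t^{n}$ on both sides is a genuine polynomial identity in $q$. I would state this as a formal identity to avoid any analytic concerns, mirroring the fact that the left-hand side is manifestly a formal power series whose $t^n$-coefficient is the polynomial $S_q[n,k]$.
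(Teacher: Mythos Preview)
Your proof is correct and follows essentially the same approach as the paper's: both exploit the factorization~\eqref{equation_expansion} to write $\wt(w)\cdot t^{\ell(w)} = t^{k}\prod_{i=1}^{k}\ls(u_{i})\,t^{\ell(u_{i})}$ and then sum each factor as a geometric series in $t$. Your version spells out the $q^{\binom{k}{2}}$ cancellation and the formal-power-series bookkeeping a bit more explicitly, but the argument is the same.
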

\begin{proof}
The left-hand side 
of~\eqref{equation_generating_function}
is the sum of over all $RG$-words $w$ of length
at least $k$ with largest letter $k$ where each term is
$\wt(w) \cdot t^{\ell(w)}$.
Using the expansion
in equation~\eqref{equation_expansion},
that is,
$w = 1 \cdot u_{1} \cdot 2 \cdot u_{2} \cdots k \cdot u_{k}$
where $u_{i}$ is a word in $[1,i]^{*}$,
observe the weight
of $w$ factors as
$\wt(w) = \ls(u_{1}) \cdot \ls(u_{2}) \cdots \ls(u_{k})$
whereas the term
$t^{\ell(w)}$ factors
as
$t^{k} \cdot t^{\ell(u_{1})} \cdot t^{\ell(u_{2})} \cdots t^{\ell(u_{k})}$.
Since there are no restrictions on the length of $i$th word $u_{i}$,
all the words $u_{i}$ for $i = 1, \ldots, k$
together contribute the factor
$1 + [i]_{q} \cdot t + [i]_{q}^{2} \cdot t^{2} + \cdots
= 1/(1 -  [i]_{q} \cdot t)$.
By multiplying together all the contributions from
all of the words $u_{i}$,
the identity follows.
\end{proof}

\section{A poset proof of Carlitz's identity}
\label{section_poset_Carlitz}

In this section we state a poset
decomposition theorem for the Cartesian product
of chains.
This decomposition implies 
Carlitz's identity.
For basic poset terminology 
and background,
we refer the reader to Stanley's treatise~\cite[Chapter~3]{Stanley_EC_I}.

Let $C_{m}$ denote 
the chain on $m$ elements.
Recall that $\Ppp^{n}$ is the set of all words
of length~$n$ having positive integer entries.
We make this set into a poset, in fact, a lattice,
by entrywise comparison, where
the partial order relation is given by
$v_{1} v_{2} \cdots v_{n} \leq w_{1} w_{2} \cdots w_{n}$
if and only if $v_{i} \leq w_{i}$ for all indices $1 \leq i \leq n$.
Note that $[1,m]^{n}$ is the subposet consisting of all
words of length~$n$
where the entries are at most~$m$.

For a word $v$ in $\rg{n}{k}$,
factor $v$ according to equation~\eqref{equation_expansion},
that is,
write $v$ as the product
$v = 1 \cdot u_{1} \cdot 2 \cdot u_{2} \cdots u_{k-1} \cdot k \cdot u_{k}$,
where each factor~$u_i$ belongs to $[1,i]^*$.
For $m \geq n$ define the word 
$\omega_{m}(v) = 
m \cdot u_{1} \cdot m \cdot u_{2} \cdots u_{k-1} \cdot m \cdot u_{k}$.
Effectively,  each left-to-right maxima is replaced by 
the integer $m$.
Directly it is clear that the interval
$[v,\omega_{m}(v)]$ in $\Ppp^{n}$
is isomorphic to a product of chains, that is,
$$
[v,\omega_{m}(v)] 
\cong
C_{m} \times C_{m-1} \timesdots C_{m-k+1} .
$$

\begin{theorem}
The $n$-fold Cartesian product of the $m$-chain
has the decomposition
\begin{equation*}
[1,m]^{n}
=
\bigcupdotalt_{0 \leq k \leq \min(m,n)} \:  
\bigcupdotalt_{v \in \rg{n}{k}}
[v,\omega_{m}(v)] .
\end{equation*}
\label{theorem_Carlitz_poset}
\end{theorem}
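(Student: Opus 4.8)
The plan is to show that the sets $[v, \omega_m(v)]$ for $v \in \rg{n}{k}$, as $k$ ranges over $0 \le k \le \min(m,n)$, partition $[1,m]^n$. Concretely, I would prove that every word $x \in [1,m]^n$ lies in exactly one such interval by exhibiting a canonical $v = v(x)$ together with a verification that $x \le \omega_m(v)$, and then checking uniqueness. Given $x = x_1 x_2 \cdots x_n \in [1,m]^n$, define $v = v(x)$ as follows: scan $x$ from left to right, and declare position $i$ to be a \emph{record position} if $x_i = m$ and $x_i$ is strictly larger than every entry to its left that is \emph{not} itself at a record position --- equivalently, the record positions are exactly those places where, in the expansion analogous to~\eqref{equation_expansion}, a left-to-right maximum of the corresponding $RG$-word must sit. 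Then set $v_i$ at the $j$th record position equal to $j$, and at a non-record position set $v_i$ equal to $x_i$ (which by construction is at most the index of the most recent record). One checks directly that this $v$ is an $RG$-word of the form $1 \cdot u_1 \cdot 2 \cdot u_2 \cdots k \cdot u_k$ with $u_i \in [1,i]^*$, where $k$ is the number of record positions, and that $x$ lies in the product-of-chains interval $[v, \omega_m(v)]$: at the record positions $x$ attains the top value $m$ (the top of $\omega_m(v)$), and at the other positions $v_i = x_i$ lies between $1$ and $i$, matching the $i$th chain factor $C_i$.

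The verification that the pieces are pairwise disjoint is the crux. Suppose $x \in [v, \omega_m(v)] \cap [v', \omega_m(v')]$ with $v \in \rg{n}{k}$ and $v' \in \rg{n}{k'}$. The key observation is that $\omega_m(v)$ records the positions of left-to-right maxima of $v$ as exactly the positions where $\omega_m(v)$ equals $m$, while all non-maxima positions $i$ satisfy $\omega_m(v)_i = v_i < m$ (since $v_i \le i - 1 < m$ there, using $m \ge n$). Thus $x_i = m$ forces $i$ to be a maxima position of $v$, and conversely if $i$ is a maxima position then $x_i \ge v_i$ but also $x_i \le \omega_m(v)_i = m$ and --- here is the delicate point --- one must argue $x_i = m$, i.e.\ that $x$ cannot dip below $m$ at a maxima slot. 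This is forced because below a maxima position the interval structure is $C_m$ (the full chain) only at that coordinate; but the constraint that $x \ge v$ does not pin $x_i$ down. So instead I would characterize the maxima positions intrinsically from $x$ (as in the construction of $v(x)$ above) and show that \emph{both} $v$ and $v'$ must have their left-to-right maxima at precisely the record positions of $x$, whence $k = k'$ and $v = v'$ entry by entry.

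Finally, to see the union is all of $[1,m]^n$, note that the construction $x \mapsto v(x)$ is defined for every $x \in [1,m]^n$ and produces some $v \in \rg{n}{k}$ with $k \le \min(m,n)$ together with a witness $x \in [v, \omega_m(v)]$; combined with disjointness this gives a genuine set partition. The bound $k \le \min(m,n)$ is automatic: $k \le n$ since $v$ has length $n$, and $k \le m$ since an $RG$-word's maxima are bounded by $m$ when its entries fit in $[1,m]$ (and a fortiori the record positions of $x \in [1,m]^n$ number at most $m$). The main obstacle, as flagged, is the disjointness argument --- specifically pinning down that the ``$=m$'' coordinates of $x$ reconstruct the left-to-right maxima positions unambiguously --- and I expect the cleanest route is to prove the two-sided statement that the record positions of $x$ (defined purely in terms of $x$) coincide with the maxima positions of any $v$ for which $x \in [v, \omega_m(v)]$, which simultaneously yields existence, uniqueness, and the index bound.
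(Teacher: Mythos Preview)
Your construction of $v(x)$ is incorrect, and the error is exactly at the ``delicate point'' you flagged: it is simply \emph{false} that $x$ must equal $m$ at every left-to-right-maxima slot of the associated $RG$-word, so defining record positions by the condition $x_i = m$ cannot work. Take $n = m = 3$ and $x = 132$. The unique $v$ with $x \in [v, \omega_3(v)]$ is $v = 122$, since $\omega_3(122) = 332$ and $122 \le 132 \le 332$; the left-to-right maxima of $v$ sit at positions $1$ and $2$, yet $x_1 = 1 \ne m$. Your rule declares only position $2$ a record (the sole entry equal to $m$), produces $v_1 = x_1 = 1$, $v_2 = 1$, $v_3 = x_3 = 2$, hence $v = 112$, and then claims $x \in [112, \omega_3(112)] = [112, 313]$ --- which fails at the second coordinate. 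Your parenthetical check (``$v_i = x_i$ is at most the index of the most recent record'') already breaks here: $x_3 = 2$ exceeds the index $1$ of the only record.

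The correct intrinsic description is recursive rather than pointwise: position $i$ is a maxima slot of $v$ if and only if $x_i$ strictly exceeds the number of maxima slots among positions $1, \ldots, i-1$ (equivalently, $x_i > \max(v_1, \ldots, v_{i-1})$ once $v_1, \ldots, v_{i-1}$ have been determined), in which case $v_i$ is set to that count plus one; otherwise $v_i = x_i$. This is precisely what the paper's map $\varphi$ computes by iterating the one-step fix $f$: locate the first position where the current word violates the $RG$ condition and lower that entry to one more than the running maximum. The fiber description~\eqref{equation_fiber} then yields existence and uniqueness simultaneously, since $\varphi$ is a well-defined function on $[1,m]^n$ and its fibers, intersected with $[1,m]^n$, are exactly the intervals $[v, \omega_m(v)]$. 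Your overall strategy --- build a canonical $v(x)$ and show it is the unique interval label containing $x$ --- is the paper's strategy; only the rule for $v(x)$ needs to be replaced by the recursive one above.
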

\begin{proof}
Define a map $f : \Ppp^{n} \longrightarrow \Ppp^{n}$ as follows.
Let $w = w_{1} w_{2} \cdots w_{n}$ be a word in $\Ppp^{n}$.
If $w$ is an $RG$-word, let $f(w) = w$.
Otherwise let $i$ be the smallest index in
$w$ reading from left to right
that makes~$w$ fail to be an $RG$-word.
In other words, $i$ is the smallest index such that
$\max(0,w_{1}, w_{2}, \ldots, w_{i-1})+1 < w_{i}$.
Let $f(w)$ be the new word formed by
replacing the $i$th entry of $w$
with
$\max(0,w_{1}, w_{2}, \ldots, w_{i-1})+1$.
Observe that for all words $w$
we obtain the poset inequality
$f(w) \leq w$.

Since the word $w$ only has $n$ entries, we know that
the $(n+1)$st iteration of $f$ is equal to the
$n$th iteration of $f$, that is, $f^{n+1}(w) = f^{n}(w)$.
Furthermore, $f^{n}(w)$ is an $RG$-word.
Finally, define
$\varphi : \Ppp^{n} \longrightarrow 
\bigcupdotalt_{0 \leq k \leq n} \rg{n}{k}$
to be the map $f^{n}$.
Observe that $\varphi$ is a surjection since every $RG$-word
is a fixed point.
Furthermore, for all words $w$ the
inequality  $\varphi(w) \leq w$ holds
in the poset $\Ppp^{n}$.

Let $v$ be a word in $\rg{n}{k}$.
Use the expansion~\eqref{equation_expansion}
to write $v$ in the form
$v = 1 \cdot u_{1} \cdot 2 \cdot u_{2} \cdots u_{k-1} \cdot k \cdot u_{k}$,
where $u_{i} \in [1,i]^{*}$.
It is straightforward to check that the
fiber $\varphi^{-1}(v)$ is given by
\begin{equation}
\varphi^{-1}(v)
     =
\{ j_{1} \cdot u_{1} \cdot j_{2} \cdot u_{2} \cdots 
           u_{k-1} \cdot j_{k} \cdot u_{k}
                 \: : \: 
       i \leq j_{i} \text{ for } i=1, 2, \ldots, k\}.
\label{equation_fiber}
\end{equation}
Observe that 
as a poset this fiber is isomorphic to $\Ppp^{k}$.
When we restrict to $[1,m]^{n}$
we obtain that the intersection
$\varphi^{-1}(v) \cap [1,m]^{n}$ 
is the interval $[v,\omega_{m}(v)]$.
Taking the disjoint union over all
$RG$-words~$v$, the decomposition follows.
\end{proof}

By considering the rank generating function of
Theorem~\ref{theorem_Carlitz_poset},
we can obtain a poset
theoretic proof of Carlitz's identity~\cite[Section~3]{Carlitz_identity}.
Other proofs are due to Milne
using finite operator techniques on 
restricted growth functions~\cite{Milne_q_analog},
de M\'edicis and Leroux via interpreting the identity
as counting products of matrices over the finite field $\GF(q)$
having non-zero columns~\cite{de_Medicis_Leroux_unified},
and Ehrenborg and Readdy using the theory of juggling
sequences~\cite{Ehrenborg_Readdy_juggling}.
\begin{corollary}[Carlitz, 1948]
The following $q$-identity holds:
\begin{equation}
\label{equation_Carlitz_identity}
     [m]_{q}^{n}
   =
   \sum_{k=0}^{n}
     q^{\binom{k}{2}} \cdot S_{q}[n, k] \cdot [k]_{q}! \cdot \qchoose{m}{k}{q}.
\end{equation}
\label{corollary_Carlitz_identity}
\end{corollary}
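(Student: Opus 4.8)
The plan is to extract the identity directly from the rank generating function of the poset decomposition in Theorem~\ref{theorem_Carlitz_poset}. First I would make $[1,m]^{n}$ into a ranked poset using the rank function $\rho(w) = \sum_{i=1}^{n} (w_{i}-1)$, so that the minimal element $1 1 \cdots 1$ has rank~$0$, every covering relation raises $\rho$ by one, and $q^{\rho(w)} = \ls(w)$. Since $[1,m]^{n} = C_{m}^{\,n}$ as a poset, its rank generating function is
\[
\sum_{w \in [1,m]^{n}} q^{\rho(w)} = [m]_{q}^{n} ,
\]
which is the left-hand side of~\eqref{equation_Carlitz_identity}.

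Next I would compute the same generating function block by block using the decomposition. Fix $k$ with $0 \le k \le \min(m,n)$ and a word $v \in \rg{n}{k}$. As noted just before Theorem~\ref{theorem_Carlitz_poset}, the interval $[v,\omega_{m}(v)]$ is isomorphic to $C_{m} \times C_{m-1} \timesdots C_{m-k+1}$, and its bottom element $v$ sits at rank $\rho(v)$ inside $[1,m]^{n}$. Restricting $\rho$ to this interval differs from its intrinsic rank function only by the shift $\rho(v)$, so
\[
\sum_{w \in [v,\omega_{m}(v)]} q^{\rho(w)}
= q^{\rho(v)} \cdot \prod_{i=0}^{k-1} [m-i]_{q}
= q^{\rho(v)} \cdot [k]_{q}! \cdot \qchoose{m}{k}{q},
\]
the last step being the usual factorization of the Gaussian binomial coefficient. (For $k > m$ the product is $0$, matching both the fact that $[1,m]^{n}$ contains no such interval and the vanishing of $\qchoose{m}{k}{q}$, so the summation range may be extended to $k = 0, \ldots, n$.)

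Finally I would sum over all $v \in \rg{n}{k}$ and then over $k$, using the disjointness of the union in Theorem~\ref{theorem_Carlitz_poset} to add the block contributions, together with the identity $q^{\rho(v)} = \ls(v) = q^{\binom{k}{2}} \cdot \wt(v)$ coming straight from the definition of the weight. This gives
\[
[m]_{q}^{n}
= \sum_{k=0}^{n} q^{\binom{k}{2}} \left( \sum_{v \in \rg{n}{k}} \wt(v) \right) [k]_{q}! \cdot \qchoose{m}{k}{q}
= \sum_{k=0}^{n} q^{\binom{k}{2}} \cdot S_{q}[n,k] \cdot [k]_{q}! \cdot \qchoose{m}{k}{q}
\]
by~\eqref{equation_rg_interpretation}, which is exactly~\eqref{equation_Carlitz_identity}. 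The substantive content is already packaged in Theorem~\ref{theorem_Carlitz_poset}; the only point requiring care is the bookkeeping of the power of $q$, namely checking that the rank shift $q^{\rho(v)}$ of the bottom of each interval is precisely $q^{\binom{k}{2}} \wt(v)$, so that the prefactor $q^{\binom{k}{2}}$ in Carlitz's identity is produced automatically. I do not anticipate any genuine obstacle beyond that.
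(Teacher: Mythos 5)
Your proposal is correct and is essentially the paper's own proof: the authors likewise take the rank generating function of both sides of Theorem~\ref{theorem_Carlitz_poset}, compute the contribution of each interval $[v,\omega_{m}(v)]$ as $q^{\binom{k}{2}}\cdot\wt(v)\cdot[m]_{q}[m-1]_{q}\cdots[m-k+1]_{q}$, and sum over all $RG$-words. Your added bookkeeping (the shift $q^{\rho(v)}=\ls(v)=q^{\binom{k}{2}}\wt(v)$ and the vanishing of the terms with $k>m$) is exactly the detail the paper leaves implicit.
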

\begin{proof}
The cases when $n=0$ or $m=0$ are straightforward.
The rank generating function of
 the left-hand side of Theorem~\ref{theorem_Carlitz_poset}
is $[m]_{q}^{n}$.
The rank generating function of
the interval
$[v, \omega_{m}(v)]$ is
$q^{\binom{k}{2}} \cdot \wt(v)
\cdot [m]_{q} \cdot [m-1]_{q} \cdots [m-k+1]_{q}$.
By summing over all $RG$-words,
the result follows.
\end{proof}

\begin{remark}
{\em
Similar 
poset techniques used to prove 
Theorem~\ref{theorem_Carlitz_poset}
and 
Corollary~\ref{corollary_Carlitz_identity}  
can be applied to obtain the identities in 
Section~\ref{section_recurrence}.
The proofs are omitted.}
\end{remark}

The map $\varphi$ that appears in the proof of
Theorem~\ref{theorem_Carlitz_poset} has interesting
properties.
\begin{proposition}
The map
$\varphi : \Ppp^{n} \longrightarrow
\bigcupdotalt_{0 \leq k \leq n} \rg{n}{k}$
is the dual of a closure operator, that is, it satisfies the following
three properties:
\vspace*{-3mm}
\begin{enumerate}[(i)]
\item
$\varphi(w) \leq w$,
\vspace*{-2.5mm}
\item
$\varphi^{2}(w) = \varphi(w)$ and
\vspace*{-2.5mm}
\item
$v \leq w$ implies that $\varphi(v) \leq \varphi(w)$.
\end{enumerate}
\end{proposition}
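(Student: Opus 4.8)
The plan is to deduce all three properties at once from a single structural description of the map: for every $w \in \Ppp^{n}$, the word $\varphi(w)$ is the \emph{greatest} $RG$-word that lies below $w$ in the lattice $\Ppp^{n}$. Granting this, property~(i) is part of the statement --- and was already recorded in the proof of Theorem~\ref{theorem_Carlitz_poset}. Property~(ii) holds because $\varphi(w)$ is an $RG$-word and every $RG$-word is, by definition, a fixed point of $f$, hence of $\varphi = f^{n}$, so $\varphi^{2}(w) = \varphi(w)$. Property~(iii) follows too: if $v \le w$, then $\varphi(v) \le v \le w$ and $\varphi(v)$ is an $RG$-word, so, being an $RG$-word below $w$, it lies below the greatest such word, which is $\varphi(w)$; thus $\varphi(v) \le \varphi(w)$.

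So the whole task reduces to proving the structural description. Since the proof of Theorem~\ref{theorem_Carlitz_poset} already shows that $\varphi(w)$ is an $RG$-word with $\varphi(w) \le w$, it remains only to show that every $RG$-word $u \le w$ satisfies $u \le \varphi(w)$. First I would prove the one-step statement --- \emph{if $u$ is an $RG$-word and $u \le x$, then $u \le f(x)$} --- and then iterate it: from $u \le w$ we get $u \le f(w)$, then $u \le f^{2}(w)$, and after $n$ steps $u \le f^{n}(w) = \varphi(w)$. For the one-step statement, if $x$ is already an $RG$-word there is nothing to prove. Otherwise $f(x)$ agrees with $x$ except in the single coordinate $i$, the least index at which $x$ fails the $RG$-condition, where $f(x)_{i} = \max(0, x_{1}, \ldots, x_{i-1}) + 1$. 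For $\ell \ne i$ we have $u_{\ell} \le x_{\ell} = f(x)_{\ell}$, while at coordinate $i$, using that $u$ is an $RG$-word and that $u_{\ell} \le x_{\ell}$ for $\ell < i$,
\[
u_{i} \;\le\; \max(0, u_{1}, \ldots, u_{i-1}) + 1 \;\le\; \max(0, x_{1}, \ldots, x_{i-1}) + 1 \;=\; f(x)_{i}.
\]
Hence $u \le f(x)$.

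The step I expect to be the genuine obstacle is property~(iii). The naive route --- asserting that $\varphi = f^{n}$ is a composite of order-preserving maps --- does not work, because $f$ itself is \emph{not} order-preserving: the word $1135$ lies entrywise below the word $1235$, yet $f(1135) = 1125$ does not lie entrywise below $f(1235) = 1234$. The argument above sidesteps this by never comparing the $f$-images of two different words; instead one pins down a single $RG$-word $u = \varphi(v)$ and checks that it remains below every word in the orbit $w, f(w), f^{2}(w), \ldots, \varphi(w)$. The only point needing care is recognizing that the coordinate altered by $f$ is precisely where the $RG$-condition on $u$ is invoked; the rest is bookkeeping.
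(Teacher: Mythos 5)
Your proof is correct, but it takes a genuinely different route from the paper's. You establish the stronger structural fact that $\varphi(w)$ is the \emph{greatest} $RG$-word lying weakly below $w$, via the one-step lemma that an $RG$-word $u \leq x$ stays below $f(x)$ (the point being that the single coordinate $f$ alters satisfies $f(x)_{i} = \max(0,x_{1},\ldots,x_{i-1})+1 \geq \max(0,u_{1},\ldots,u_{i-1})+1 \geq u_{i}$), and then iterate; monotonicity of $\varphi$ is then immediate since $\varphi(v)$ is an $RG$-word below $w$. The paper instead proves property~(iii) directly by contradiction: assuming $v \leq w$ but $\varphi(v) \not\leq \varphi(w)$, it takes the least index $i$ with $\varphi(w)_{i} < \varphi(v)_{i}$, observes that $\varphi(w)_{i} < \varphi(v)_{i} \leq v_{i} \leq w_{i}$ forces the $i$th coordinate to have been altered in computing $\varphi(w)$, and derives $\varphi(w)_{i} = \max(0,\varphi(w)_{1},\ldots,\varphi(w)_{i-1})+1 \geq \max(0,\varphi(v)_{1},\ldots,\varphi(v)_{i-1})+1 \geq \varphi(v)_{i}$, a contradiction. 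Both arguments ultimately combine the same two ingredients --- the formula for the altered coordinate and the $RG$-condition --- but yours buys more: it identifies $\varphi(w)$ as the greatest fixed point below $w$, which is the standard characterization of a dual closure operator and makes all three properties transparent simultaneously, whereas the paper's argument is shorter but yields only monotonicity. Your remark that $f$ itself is not order-preserving (indeed $1135 \leq 1235$ while $f(1135) = 1125 \not\leq 1234 = f(1235)$) is correct and worth recording, since it rules out the naive ``composite of monotone maps'' shortcut that one might otherwise be tempted to use.
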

\begin{proof}
Properties~(i) and~(ii) are direct from the construction
of the map $\varphi$.
To prove property~(iii)
assume that
we have $v \leq w$
but
$\varphi(v) \not\leq \varphi(w)$.
Let $i$ be the smallest index such that
$\varphi(w)_{i} < \varphi(v)_{i}$.
Especially for $j < i$ we have
$\varphi(w)_{j} \geq \varphi(v)_{j}$.
Since 
$\varphi(w)_{i} < \varphi(v)_{i} \leq v_{i} \leq w_{i}$,
we know that the $i$th coordinate is changed
when computing $\varphi(w)$.
Hence
$\varphi(w)_{i}
=
\max(0,\varphi(w)_{1}, \ldots, \varphi(w)_{i-1}) + 1
\geq
\max(0,\varphi(v)_{1}, \ldots, \varphi(v)_{i-1}) + 1
\geq
\varphi(v)_{i}$,
a contradiction.
\end{proof}

As a consequence of the closure property,
if $v$ and $w$ are $RG$-words then so is
their join $v \join w = u$
where the $i$th entry is given by
$u_{i} = \max(v_{i},w_{i})$.
This can also be proven directly.
Note however that the set of $RG$-words
is not closed under the meet operation;
see for instance the two $RG$-words
$1123$ and $1213$.

\section{$q$-Vandermonde convolutions}
\label{section_convolution}

Verde-Star gave Vandermonde convolution identities for Stirling
numbers of the second kind~\cite[equations~(6.24), (6.25)]{Verde-Star}. 
Chen gave a grammatical
proof for the first of these identities~\cite[Proposition~4.1]{Chen}. 
For $q$-analogues of both identities, de M\'edicis and Leroux used
$0,1$-tableaux for their
argument~\cite[equations~(1.12), (1.14)]
{de_Medicis_Leroux_convolution}. 
In this section  we present 
combinatorial proofs of the de M\'edicis--Leroux results using $RG$-words.

As a remark, 
Theorem~\ref{theorem_Mercier_recurrence}
is the special case of $n=1$ in 
Theorem~\ref{theorem_Anne_and_Pierre_1}.

\begin{theorem}[de M\'edicis--Leroux, 1995]
The following $q$-Vandermonde convolution
holds for $q$-Stirling numbers of the second kind:
\begin{equation}
\label{equation_convolution_1}
     S_{q}[m+n,k] = \sum_{i+j \geq k} \binom{m}{j} 
                  \cdot q^{i\cdot (i+j-k)} \cdot [i]_{q}^{m-j} 
                  \cdot S_{q}[n,i] \cdot S_{q}[j, k-i] .
\end{equation}
\label{theorem_Anne_and_Pierre_1}
\end{theorem}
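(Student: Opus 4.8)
The plan is to decompose a word $w \in \rg{m+n}{k}$ according to how its first $n$ letters sit inside it, mirroring the factorization strategy already used in the proofs of Theorems~\ref{theorem_de_Medicis_Leroux_recurrence} and~\ref{theorem_Anne_and_Pierre_1}'s recurrence-style cousins. Write $w = x \cdot y$ where $x = w_1 \cdots w_n$ is the prefix of length $n$ and $y = w_{n+1} \cdots w_{m+n}$ is the suffix of length $m$. The prefix $x$ is itself an $RG$-word, say $x \in \rg{n}{i}$ for some $i$ with $0 \le i \le \min(n,k)$. The suffix $y$ is a word of length $m$ whose entries lie in $[1,k]$, but it is constrained: the letters of $y$ that are left-to-right maxima of the full word $w$ (i.e. the letters introducing the values $i+1, i+2, \ldots, k$ for the first time) must appear in increasing order, and between/after these "new maxima" the remaining letters of $y$ are free subject to the usual bound. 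So I would split the positions of $y$ into those that carry a new maximum and those that do not: if $j = k - i$ new maxima occur in $y$ — necessarily the values $i+1, \ldots, k$ in that order — they occupy $j$ of the $m$ positions, and the pattern they form among themselves is recorded by an $RG$-word in $\rg{j}{j}$; wait, more precisely, the relative arrangement of the new-maximum letters together with the repeated letters that lie among them forms an $RG$-word in $\rg{j + (\text{something})}{k-i}$.

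Here is the cleaner bookkeeping I would actually carry out. Among the $m$ letters of $y$, let $j$ be the number of positions at or after which a new maximum has appeared within $y$ — that is, reindex: the subword of $y$ obtained by deleting every letter that is $\le i$ and is \emph{not} needed to be a new maximum... This is getting delicate, so let me organize it as: (a) choose which $j$ of the $m$ positions of $y$ will hold the "block-opening" structure for the new values $i+1,\dots,k$ together with their internal repeats — there are $\binom{m}{j}$ such choices; (b) on those $j$ positions, place a word $v$ which, after subtracting $i$ from each entry, is an $RG$-word in $\rg{j}{k-i}$, contributing $S_q[j,k-i]$ up to a power of $q$ coming from the shift by $i$; (c) on the remaining $m-j$ positions, place any word with entries in $[1,i]$, contributing $[i]_q^{m-j}$; (d) the prefix $x$ contributes $S_q[n,i]$. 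The only remaining point is to track the exponent of $q$: the weight $\wt(w)$ involves $\sum (w_\ell - 1) - \binom{k}{2}$, and reassembling from $\wt(x)$ (which carries $-\binom{i}{2}$), from the shifted $RG$-word on the $j$ positions (which carries $-\binom{k-i}{2}$ after subtracting $i$ from each of its $j$ entries, but the subtraction of $i$ from all $j$ letters means we must add back $i \cdot j$... together with the normalization difference $\binom{k}{2} - \binom{i}{2} - \binom{k-i}{2} = i(k-i)$), and from the free $[1,i]$ letters, should combine to give exactly $q^{i(i+j-k)}$. I expect the clean identity $\binom{k}{2} - \binom{i}{2} - \binom{k-i}{2} = i(k-i)$ together with one extra $q^{i j}$ from the shift to collapse to $q^{i \cdot (i+j-k)}$ after the $+ij - i(k-i)$ cancellation, i.e. $ij - i(k-i) = i(j - k + i) = i(i+j-k)$, which matches.

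The main obstacle — and the step I would be most careful with — is making precise the claim in (a)/(b): namely that a word $w \in \rg{m+n}{k}$ with prefix in $\rg{n}{i}$ decomposes \emph{bijectively} as (prefix $\in \rg{n}{i}$) $\times$ (choice of $j$ positions among $m$) $\times$ ($RG$-word in $\rg{j}{k-i}$, shifted up by $i$) $\times$ (free word in $[1,i]^{m-j}$), where $j$ ranges so that $i + j \ge k$ (we need at least $k-i$ new maxima, but $j$ counts a possibly larger set of "marked" positions — so in fact the marked positions are exactly those holding letters $> i$, OR holding letters $\le i$ that occur after the first new maximum but... no). I think the correct and provable statement is: mark exactly the positions of $y$ holding the first occurrences of $i+1,\dots,k$ (that is $k-i$ positions) PLUS all positions of $y$ lying strictly after the first of those occurrences that hold a letter which is "new relative to the prefix-plus-earlier-$y$" — equivalently, after deleting from $y$ every letter $\le i$ that precedes the first new maximum and standardizing, one gets an $RG$-word of some length $j \ge k-i$ with max $k-i$; the $\binom{m}{j}$ counts interleaving that standardized word back among $m-j$ copies of letters freely chosen from $[1,i]$; and the free letters contribute $[i]_q^{m-j}$. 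Verifying that this map is a well-defined bijection onto $\rg{m+n}{k}$, and that it is weight-preserving with the stated $q$-power, is the crux; once that is in hand, summing over $i$ and $j$ (equivalently over $i+j \ge k$) gives exactly the right-hand side of~\eqref{equation_convolution_1}, and the cases $m=0$ or $n=0$ or $k > m+n$ are immediate.
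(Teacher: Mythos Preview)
Your approach is the same as the paper's: split $w\in\rg{m+n}{k}$ as a length-$n$ prefix $x\in\rg{n}{i}$ and a length-$m$ suffix $y$, then separate the letters of $y$ according to whether they exceed $i$. Your steps (a)--(d) and your $q$-power bookkeeping (via $ij - i(k-i) = i(i+j-k)$) are exactly right.

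The one genuine muddle is your definition of $j$, which you circle around without landing on. The paper's choice---and the one that makes your (a)/(b) a transparent bijection---is simply: let $j$ be the number of entries of $y$ that are \emph{strictly greater than} $i$. With this definition the forward map is immediate: the subword $v$ of $y$ formed by those $j$ entries satisfies $v^{(-i)}\in\rg{j}{k-i}$ (its first letter must be $i+1$, its next new value $i+2$, and so on, because $w$ is an $RG$-word), while the remaining $m-j$ entries of $y$ lie freely in $[1,i]$, and their positions among the $m$ slots give the $\binom{m}{j}$. The inverse is exactly your (a)--(c), and it lands in $\rg{m+n}{k}$ because interspersing letters from $[1,i]$ never disturbs the $RG$ condition once the prefix already has maximum $i$. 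Your final-paragraph attempt (``first occurrences of $i+1,\dots,k$ PLUS all positions \ldots'') is an overcomplication and is not equivalent; drop it in favor of the clean ``letters $>i$'' criterion and the proof is complete.
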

\begin{proof}
Given a  word $w \in \rg{m+n}{k}$, factor
it as~$w = u \cdot z$ where $u$ has length $n$ and
$i$ is the largest entry in $u$.
By assumption, $u \in \rg{n}{i}$.

The second factor $z = w_{n+1} \cdot w_{n+2} \cdots w_{n+m}$
has length $m$ and its maximal entry is at most~$k$. In particular, if
$i < k$ then the maximal entry for $z$ is exactly $k$. Suppose
there are $j$ entries in~$z$ that are strictly larger than
$i$. These $j$ entries from $z$ form a subword $v$.
Denote by $v^{(-i)}$ the shift of $v$ by
subtracting $i$ from each entry in $v$. It is straightforward to check
that $v^{(-i)} \in \rg{j}{k-i}$.

For any word $w \in \rg{m+n}{k}$, we can decompose it as
described above. In such a decomposition, the first segment $u$
contributes to a factor of $S_{q}[n, i]$. The subsequence $v$ of the second segment~$z$
contributes to a factor of $S_{q}[j, k-i] \cdot q^{i \cdot (j-(k-i))}$
since the shift $v^{(-i)}$ causes a weight loss of $q^{i}$ from
each of the 
$j-(k-i)$ repeated entries in $v$. Finally, the remaining 
entries in~$z$ that are less than or equal to $i$ range from $1$ to $i$.  Each will
contribute to a factor of $[i]_{q}$. These $m-j$ entries can be assigned
at any position in $z$, which gives~$\binom{m}{j}$
choices. Multiplying all these weights, we obtain the desired identity.
\end{proof}

Note that Theorem~\ref{theorem_de_Medicis_Leroux_recurrence}
is a special case of 
Theorem~\ref{theorem_Anne_and_Pierre_2}
when one takes $r=0$.

\begin{theorem}[de M\'edicis--Leroux, 1995]
The following $q$-Vandermonde convolution
holds for $q$-Stirling numbers of the second kind:
\begin{equation}
\label{equation_convolution_2}
     S_{q}[n+1, k+r+1] = \sum_{i=0}^{n} \sum_{j=r}^{i} \binom{i}{j} 
                       \cdot q^{(k+1) \cdot (j-r)} \cdot [k+1]_{q} ^{i-j} 
                       \cdot S_{q}[j,r] \cdot S_{q}[n-i, k].
\end{equation}
\label{theorem_Anne_and_Pierre_2}
\end{theorem}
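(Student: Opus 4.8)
The plan is to mimic the factorization strategy used in the proofs of Theorems~\ref{theorem_de_Medicis_Leroux_recurrence} and~\ref{theorem_Anne_and_Pierre_1}, now cutting an $RG$-word into \emph{three} pieces. Start with a word $w \in \rg{n+1}{k+r+1}$. First I would peel off the prefix of $w$ ending at the first occurrence of the letter $r+1$: write $w = a \cdot (r+1) \cdot b$, where $a \in \rg{j}{r}$ records the part of $w$ using only the letters $1, \ldots, r$ (here $j \geq r$ is the length of $a$, so $a$ contributes a factor $S_q[j,r]$), the displayed letter $r+1$ is the first left-to-right maximum exceeding $r$, and $b$ is the remaining suffix of length $n-j$ with entries in $[1, k+r+1]$. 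The letters of $b$ split into two kinds: those in $[1, r+1]$ (there are, say, $i-j$ of them, where $i$ will index the outer sum) and those in $[r+2, k+r+1]$, which form a subword that, after subtracting $r+1$ from each entry, becomes an $RG$-word counted by $S_q[n-i, k]$.

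The weight bookkeeping is then the main thing to get right, and it is exactly parallel to the bookkeeping in Theorem~\ref{theorem_Anne_and_Pierre_1}. The prefix $a$ contributes $\wt$-weight summing to $S_q[j,r]$. Each of the $i-j$ ``small'' letters of $b$ (those in $[1,r+1]$) ranges over $r+1$ values and contributes independently, giving a factor $[k+1]_q^{\,i-j}$ once one accounts for the shift in the definition of $\wt$ versus $\ls$ --- I would track this carefully, since the letter $r+1$ among the small letters and the normalization $q^{-\binom{k+r+1}{2}}$ versus $q^{-\binom{r}{2}} \cdot q^{-\binom{k}{2}}$ must be reconciled. The ``large'' subword, after the downshift by $r+1$, loses weight $q^{r+1}$ on each of its repeated letters; it has $n-i$ letters and maximal entry $k$, so $n-i - k$ of them are repeats, but the exponent appearing in the identity is $(k+1)(j-r)$, so the shift accounting has to be combined with the $\binom{\cdot}{2}$-normalization shifts to produce exactly that power of $q$. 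The $\binom{i}{j}$ counts the ways to interleave the $i-j$ small letters of $b$ among the $j - r$ \ldots --- more precisely, among the positions of $b$ relative to the downshifted large subword; I would phrase this as: given the large subword and the multiset of small letters, there are $\binom{i}{j}$ ways to shuffle, where $i - j$ small positions are chosen among $i$ total positions after the cut point.

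The inverse map is the reconstruction: given $a \in \rg{j}{r}$, a word in $[1,r+1]^{i-j}$, a word that downshifts to an element of $\rg{n-i}{k}$, and a choice of shuffle, reassemble $w$; one checks this lands in $\rg{n+1}{k+r+1}$ and that the two maps are mutually inverse on the nose. The ranges $0 \le i \le n$ and $r \le j \le i$ are forced: $j \ge r$ because $a \in \rg{j}{r}$ needs length at least $r$, $j \le i$ because the small letters of $b$ are counted after the prefix, and $i \le n$ because $|a| + |b_{\text{small}}| \le n$. I expect the one genuine obstacle is the $q$-exponent bookkeeping: the identity's clean exponent $(k+1)(j-r)$ only emerges after summing the per-letter shift contributions ($q^{r+1}$ per repeat in the large subword), the difference $\binom{k+r+1}{2} - \binom{r}{2} - \binom{k}{2} = (k)(r) + \binom{k+r+1}{2} - \binom{k+r}{2}$-type terms, and the $\ls$-versus-$\wt$ conversions; I would verify this by a short direct computation, checking it against the $r = 0$ specialization (which must recover Theorem~\ref{theorem_de_Medicis_Leroux_recurrence}) as a sanity check. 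Everything else is routine once the three-way cut and its inverse are spelled out.
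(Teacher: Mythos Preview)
Your three-way cut is a sensible strategy, but you have placed the cut at the wrong letter, and as a result the weight and shuffle factors you claim do not come out of your decomposition. You factor $w = a \cdot (r+1) \cdot b$ with $a \in \rg{j}{r}$, and then split the suffix $b$ into ``small'' letters in $[1,r+1]$ and ``large'' letters in $[r+2,k+r+1]$. But a small letter ranging over $r+1$ values contributes $[r+1]_{q}$, not $[k+1]_{q}$ as you assert; no amount of normalization by powers of $q$ can convert $[r+1]_{q}$ into $[k+1]_{q}$, since they are different polynomials. Likewise the shuffle: your $i-j$ small letters are interleaved among the $n-j$ positions of $b$, giving $\binom{n-j}{i-j}$, not $\binom{i}{j}$. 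What your decomposition actually proves is the identity obtained from~\eqref{equation_convolution_2} by swapping the roles of $k$ and $r$ (which is of course equivalent, but is not what is written).

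The paper's proof instead cuts at the first occurrence of $k+1$: write $w = x \cdot (k+1) \cdot y$ with $x \in \rg{n-i}{k}$, so that the \emph{prefix} contributes $S_{q}[n-i,k]$. In the length-$i$ suffix $y$, the $j$ letters that are at least $k+2$ form a subword which, after subtracting $k+1$, lies in $\rg{j}{r}$ and accounts for $S_{q}[j,r]$; the shift loses $q^{k+1}$ on each of the $j-r$ repeated entries, producing exactly $q^{(k+1)(j-r)}$. The remaining $i-j$ letters of $y$ lie in $[1,k+1]$ and give $[k+1]_{q}^{\,i-j}$, and the shuffle of these among the $i$ positions of $y$ gives $\binom{i}{j}$. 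With this cut point every factor in~\eqref{equation_convolution_2} falls out immediately, with no delicate $\binom{\cdot}{2}$-bookkeeping needed.
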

\begin{proof}
This result is proved in a similar fashion
as Theorem~\ref{theorem_Anne_and_Pierre_1}.
For any $w \in \rg{n+1}{k+r+1}$, 
suppose $w$ is of the form $x \cdot (k+1) \cdot y$
where $x \in \rg{n-i}{k}$ for some $i$.
Consider the remaining word $y = w_{n-i+2} \cdots w_{n+1}$
of length $i$. The maximal entry of~$y$ is $k+r+1$. 
Suppose there are $j$ entries in~$y$ that are at least $k+2$.
These~$j$ entries form a subword $v$, and $v^{(-k-1)}$,
obtained by subtracting $k+1$ from each entry in~$v$,
is an $RG$-word in $\rg{j}{r}$, giving a total weight of $S_{q}[j,r]$.
The weight loss from the shift is
$q^{(k+1)\cdot (j-r)}$ since there are $j-r$ repeated entries.
The remaining $i-j$
entries in~$y$ can be any value from the interval~$[1,k+1]$.
Each such 
entry contributes to a factor of~$[k+1]_{q}$.  
Finally, there  are $\binom{i}{j}$ ways to place the $j$ entries
back into $u$.
This proves identity~\eqref{equation_convolution_2}.
\end{proof}

\begin{remark}
{\em
Theorem~\ref{theorem_q_Jordan}
can be viewed as an inversion of
Theorem~\ref{theorem_Mercier_recurrence}.
Furthermore,
Theorem~\ref{theorem_Mercier_recurrence}
is a special case of
Theorem~\ref{theorem_Anne_and_Pierre_1}.
Is there any sort of natural inversion 
analogue to
Theorem~\ref{theorem_Anne_and_Pierre_1}?
}
\end{remark}

\section{A $q$-analogue of the Frobenius identity}
\label{section_Frobenius}

We now prove a
$q$-analogue of the Frobenius identity by Garsia and 
Remmel~\cite[equation~I.1]{Garsia_Remmel}.

\begin{theorem}[Garsia--Remmel, 1986]
\label{theorem_Frobenius}
The following $q$-Frobenius identity holds:
\begin{equation}
\sum_{m \geq 0} [m]_{q}^{n} \cdot x^{m}
 = 
	\sum_{k=0}^{n} \frac{q^{\binom{k}{2}} 
	\cdot S_{q}[n,k] \cdot [k]_{q}! \cdot x^{k}}
	           {(1-x) \cdot (1-qx) \cdots (1-q^{k}x)}.
\label{equation_Frobenius}
\end{equation}
\end{theorem}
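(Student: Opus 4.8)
The plan is to prove the identity by summing Carlitz's identity (Corollary~\ref{corollary_Carlitz_identity}) against the weight $x^m$, treating both sides as formal power series in $x$. The left-hand side of~\eqref{equation_Frobenius} is $\sum_{m \geq 0} [m]_q^n \cdot x^m$, so I would substitute the expansion $[m]_q^n = \sum_{k=0}^{n} q^{\binom{k}{2}} \cdot S_q[n,k] \cdot [k]_q! \cdot \qchoose{m}{k}{q}$ into this sum. Interchanging the (finite inner) sum over $k$ with the sum over $m$, the task reduces to showing that for each fixed $k$,
\begin{equation*}
\sum_{m \geq 0} \qchoose{m}{k}{q} \cdot x^{m}
=
\frac{x^{k}}{(1-x)(1-qx)\cdots(1-q^{k}x)} .
\end{equation*}

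The key step is therefore establishing this generating function identity for the $q$-binomial coefficients, and I would do it combinatorially in the spirit of the paper. Recall $\qchoose{m}{k}{q}$ is the sum of $q^{|\lambda|}$ over partitions $\lambda$ fitting inside a $k \times (m-k)$ box; equivalently it is $h_{m-k}([1]_q,[2]_q,\ldots,[k]_q)$ by the same reasoning as in~\eqref{equation_complete_symmetric_functions}. So $\qchoose{m}{k}{q} \cdot x^m = x^k \cdot h_{m-k}(1,q,\ldots,q^{k})\cdot x^{m-k}$ after a shift of the variable set; summing over $m \geq k$ and using the standard generating function $\sum_{d \geq 0} h_d(y_0,\ldots,y_k)\, t^d = \prod_{i=0}^{k} 1/(1 - y_i t)$ with $y_i = q^i$, $t = x$, gives exactly $x^k / \prod_{i=0}^{k}(1-q^i x)$, which is the desired right-hand side. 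Alternatively, one can argue directly with $RG$-words: a word $w \in \rg{n}{k}$ contributes $q^{\binom{k}{2}}\wt(w) = \ls(w)/q^{\binom k2}$... more cleanly, combine the $RG$-word expansion~\eqref{equation_expansion} with a choice of the parameter $m$, paralleling the poset picture of Theorem~\ref{theorem_Carlitz_poset}: the interval $[v,\omega_m(v)]$ has rank generating function $q^{\binom k2}\wt(v)[m]_q[m-1]_q\cdots[m-k+1]_q$, so $\sum_{m \geq k}(\text{this})\cdot x^m$ produces the factor $[k]_q!\, x^k/\prod_{i=0}^k(1-q^i x)$ once one sums the telescoping product over $m$.

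The main obstacle I anticipate is the interchange of summation and the convergence bookkeeping: the right-hand side of~\eqref{equation_Frobenius} is an identity of formal power series (or of rational functions), and one must be careful that $\sum_{m} [m]_q^n x^m$ is manipulated in the ring of formal power series in $x$ over $\mathbb{Q}(q)$, where the geometric-type series $1/(1-q^i x)$ make sense. Since the inner sum over $k$ in Carlitz's identity is finite, no genuine analytic issue arises — every coefficient of $x^m$ on both sides is a finite sum — so the interchange is legitimate. The only remaining point is to confirm the generating function $\sum_{m \geq 0}\qchoose{m}{k}{q} x^m = x^k/\prod_{i=0}^k(1-q^i x)$ holds with the convention $\qchoose{m}{k}{q} = 0$ for $m < k$, which the $RG$-word/$h$-function argument handles automatically. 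Assembling these pieces yields~\eqref{equation_Frobenius}.
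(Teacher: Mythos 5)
Your proposal is correct and follows essentially the same route as the paper: the paper also reduces the identity to Carlitz's decomposition of $[m]_{q}^{n}$ (via the fibers of the poset map $\varphi$, which is just Corollary~\ref{corollary_Carlitz_identity} in disguise) and then invokes the generating function $\sum_{m \geq 0} \qchoose{m}{k}{q} x^{m} = x^{k}/\prod_{i=0}^{k}(1-q^{i}x)$ for each fixed $k$. The only difference is cosmetic --- you cite Carlitz's identity directly and justify the $q$-binomial generating function via complete homogeneous symmetric functions, whereas the paper regroups the sum by $RG$-words $v$ and quotes that generating function without proof.
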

\begin{proof}
When $n=0$ the result is direct.
We concentrate on the case $n > 0$.
For a word $w$ in~$\Ppp^{n}$ let $\max(w)$ denote its
maximal entry.
Hence the left-hand side of
equation~\eqref{equation_Frobenius} 
is given by
$$
\sum_{m \geq 0} [m]_{q}^{n} \cdot x^{m}
 = 
\sum_{m \geq 0}
\sum_{\substack{w \in \Ppp^{n} \\ \max(w) \leq m}}
\ls(w) \cdot x^{m}   . $$
Recall the poset map
$\varphi : \Ppp^{n} \longrightarrow 
\bigcupdotalt_{0 \leq k \leq n} \rg{n}{k}$
appearing in the proof of
Theorem~\ref{theorem_Carlitz_poset}.
Let $v \in \rg{n}{k}$.
The fiber $\varphi^{-1}(v)$ is given
in equation~\eqref{equation_fiber}.
The sum over this fiber 
appears in the proof of Corollary~\ref{corollary_Carlitz_identity},
that is,
\begin{align*}
\sum_{\substack{w \in \varphi^{-1}(v) \\ \max(w) \leq m}}
\ls(w)
& =
\wt(v)
\cdot
q^{\binom{k}{2}}
\cdot
[m]_{q} \cdot [m-1]_{q} \cdots [m-k+1]_{q} .
\end{align*}
Multiplying the above by $x^{m}$ and summing over all $m \geq 0$ yields
\begin{align*}
\sum_{m \geq 0}
\sum_{\substack{w \in \varphi^{-1}(v) \\ \max(w) \leq m}}
\ls(w) \cdot x^{m}
& =
\wt(v)
\cdot
q^{\binom{k}{2}}
\cdot
[k]_{q}!
\cdot
\sum_{m \geq 0}
\qchoose{m}{k}{q}
\cdot x^{m} \\
& =
\frac{
\wt(v)
\cdot
q^{\binom{k}{2}}
\cdot
[k]_{q}!
\cdot
x^{k}}{(1-x) \cdot (1-qx) \cdots (1-q^{k}x)} .
\end{align*}
The result now follows
by summing over all $RG$-words of length $n$.
\end{proof}

\section{A determinantal identity} 
\label{section_determinantal}

The following identity was first stated by
Ehrenborg~\cite[Theorem~3.1]{Ehrenborg_determinants}
who proved it using juggling patterns.
We now present a proof using $RG$-words.

\begin{theorem}[Ehrenborg, 2003]
\label{theorem_determinants}
Let $n$ and $s$ be non-negative integers. Then the following identity holds:
$$
\det(S_{q}[s+i+j, s+j])_{0 \leq i,j \leq n} = [s]_{q}^0 \cdot [s+1]_{q}^1 \cdots [s+n]_{q}^{n}.
$$
\end{theorem}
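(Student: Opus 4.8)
The plan is to exhibit the matrix as a path matrix and apply the Lindstr\"{o}m--Gessel--Viennot lemma.

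First, combining the complete homogeneous symmetric function description~\eqref{equation_complete_symmetric_functions} with the substitution $n \mapsto s+i+j$ and $k \mapsto s+j$, the $(i,j)$ entry becomes
\[
S_{q}[s+i+j, s+j] = h_{i}([1]_{q}, [2]_{q}, \ldots, [s+j]_{q}).
\]
I would read this off as a lattice path generating function, in the spirit of the expansion~\eqref{equation_expansion}: weight a monotone lattice path built from unit north and east steps by assigning the weight $[h]_{q}$ to each east step taken at height $h$ and the weight $1$ to each north step. Since $[0]_{q} = 0$, a path contributes nothing once it takes an east step at height $0$, and grouping the $i$ east steps of a path by their heights shows that the sum of the weights of all monotone paths from $A_{i} = (n-i,0)$ to $B_{j} = (n, s+j)$ equals $h_{i}([1]_{q}, \ldots, [s+j]_{q}) = S_{q}[s+i+j, s+j]$.

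Next I would take the sources $A_{0}, \ldots, A_{n}$ on the $x$-axis and the sinks $B_{0}, \ldots, B_{n}$ on the vertical line $x = n$, and apply the Lindstr\"{o}m--Gessel--Viennot lemma with these multiplicative edge weights. This writes the determinant as the signed sum, over permutations $\sigma$ of $\{0,\ldots,n\}$, of the total weight of families of pairwise vertex-disjoint paths $P_{i} \colon A_{i} \to B_{\sigma(i)}$. Only the identity permutation contributes: if $\sigma$ had an inversion $i < i'$ with $\sigma(i) > \sigma(i')$, then $A_{i}$ would lie to the right of $A_{i'}$ while $B_{\sigma(i)}$ would lie above $B_{\sigma(i')}$, and a routine planarity argument---close the path $A_{i} \to B_{\sigma(i)}$ into a curve separating $A_{i'}$ from $B_{\sigma(i')}$ using a downward ray and a rightward ray---forces the paths $P_{i}$ and $P_{i'}$ to share a lattice point, a contradiction. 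Note also that in a disjoint family no path can take an east step at height $0$, since the adjacent lattice point on the $x$-axis is an occupied source; hence every path begins with a north step.

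It remains to identify the unique vertex-disjoint family realizing the identity permutation and to compute its weight, which I would do by induction on $i$. The path $P_{0}$ has no east steps, so it is the vertical segment from $(n,0)$ to $(n,s)$. Assuming $P_{0}, \ldots, P_{i-1}$ are the ``staircase'' paths---$P_{\ell}$ ascends in column $n-\ell$ to height $s+\ell$ and then runs east to $x = n$---the path $P_{i}$ cannot leave column $n-i$ before reaching height $s+i$, since any earlier east step would land on the vertical part of $P_{i-1}$; therefore $P_{i}$ ascends to $(n-i, s+i)$ and takes its $i$ east steps at height $s+i$, contributing weight $[s+i]_{q}^{i}$, and one checks it stays disjoint from the earlier paths. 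Multiplying these weights over $i = 0, \ldots, n$ and recalling $\operatorname{sgn}(\mathrm{id}) = 1$ yields $\prod_{i=0}^{n}[s+i]_{q}^{i}$, as desired. I expect the main obstacle to be this last step: verifying cleanly that the staircase family is the \emph{only} vertex-disjoint family for the identity permutation and that its weight is exactly the stated product; pinning down the permutation via planarity is standard but deserves a careful sentence.
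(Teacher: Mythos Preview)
Your proof is correct, and it takes a genuinely different route from the paper.

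The paper expands the determinant over permutations and $(n+1)$-tuples of $RG$-words $w(i)\in\rg{s+i+\sigma(i)}{s+\sigma(i)}$, factors each $w(i)=u(i)\cdot v(i)$ with $\ell(u(i))=s+i$, and uses the statistic $a_i$ (the number of repeated letters of $w(i)$ landing in $v(i)$) to build a sign-reversing involution: when two indices share the same $a$-value, their $v$-parts are swapped. The fixed set forces $\sigma=\mathrm{id}$, $u(i)=12\cdots(s+i)$, and $v(i)\in[1,s+i]^i$, giving $\prod_i[s+i]_q^i$.

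Your approach instead recognizes $S_q[s+i+j,s+j]=h_i([1]_q,\ldots,[s+j]_q)$ as a weighted lattice-path count and applies Lindstr\"om--Gessel--Viennot. This is the natural determinant machinery for matrices whose entries are complete homogeneous symmetric functions, and it makes the proof essentially structure-free once the path model is set up: planarity pins down the permutation, and the staircase family is forced column by column. What the paper's proof buys is thematic consistency---everything stays inside the $RG$-word framework, and the involution is explicit rather than imported from LGV. What your proof buys is brevity and transparency: the product $\prod_i [s+i]_q^i$ is visibly the weight of a single forced path family. Two small points worth tightening in a final write-up: in your separation argument, handle explicitly the case where $B_{\sigma(i')}$ lies on $P_i$ (then the intersection is immediate); and note that your observation about east steps at height $0$ is not actually needed for the weight computation, since such steps carry weight $[0]_q=0$ and drop out of the path count automatically.
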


\begin{proof}
Let $T$ be the set of all $(n+2)$-tuples
$(\sigma, w(0), w(1), \ldots, w(n))$
where $\sigma$ is a permutation of
the $n+1$ elements $\{0,1, \ldots, n\}$,
and $w(i)$ is a word in
$\rg{s+i+\sigma(i)}{s+\sigma(i)}$
for all $0 \leq i \leq n$.
The determinant expands as the sum
$$
\det(S_{q}[s+i+j, s+j])_{0 \leq i,j \leq n}
=
\sum_{(\sigma, w(0), \ldots, w(n)) \in T}
(-1)^{\sigma} \cdot \wt(w(0)) \cdot \wt(w(1)) \cdots \wt(w(n)).
$$
Factor the word $w(i) \in \rg{s+i+\sigma(i)}{s+\sigma(i)}$
as $w(i) = u(i) \cdot v(i)$
where the lengths are given by
$\ell(u(i)) = s+i$
and
$\ell(v(i)) = \sigma(i)$.
Furthermore,
let $a_{i}$ denote
the number of repeated entries in the $RG$-word $w(i)$
that appear in the factor $v(i)$,
that is,
$$
a_{i} = |\{j \: : \: j > s+i, 
     w(i)_{j} = w(i)_{r} \text{ for some } r < j\}|.
$$
There are $\sigma(i) - a_{i}$
left-to-right maxima
of $w(i)$ that appear in $v(i)$.
Since
$w(i)$ has $s+\sigma(i)$ 
left-to-right maxima, we obtain that the
first factor
$u(i)$
has
$(s+\sigma(i)) - (\sigma(i) - a_{i}) = s+a_{i}$
left-to-right maxima,
that is, the factor $u(i)$
belongs to the set
$\rg{s+i}{s+a_{i}}$.
To be explicit,
the left-to-right maxima of $u(i)$
are given by
$1, 2, \ldots, s+a_{i}$.
Lastly, observe that there are $i$ repeated entries in any word
$w(i) \in \rg{s+i+\sigma(i)}{s+\sigma(i)}$
and $\sigma(i)$ is the length of $v(i)$,
yielding the bound $a_{i} \leq \min(i, \sigma(i))$ for all~$i$.

Let $T_{1} \subseteq T$ consist of all tuples
$(\sigma, w(0), \ldots, w(n))$ where the 
sequence of $a_{i}$'s for $i = 0, \ldots, n$ are distinct.
This implies $a_{i} = i = \sigma(i)$, that is,
$\sigma$ is the identity permutation.
Furthermore, the first factor $u(i)$ is equal to $12\cdots (s+i)$
and the second factor $v(i)$ 
can be any word of length $i$
with the entries from the interval $[1,s+i]$.
Thus $\wt(w(i)) = \ls(v(i))$ and the sum over all such words $v(i)$
gives a total weight of~$[s+i]_{q}^{i}$. Thus we have
\begin{equation}
\label{equation_determinant_subset1}
\sum_{(\sigma, w(0), \ldots, w(n)) \in T_{1}}
(-1)^{\sigma} \cdot \wt(w(0)) \cdot \wt(w(1)) \cdots \wt(w(n))
=
\prod_{i=0}^{n} [s+i]_{q}^{i}.
\end{equation}

Let $T_{2} = T-T_{1}$ be the complement of $T_{1}$.
Define a sign-reversing involution $\varphi$
on $T_{2}$ as follows.
For $t = (\sigma, w(0), \ldots, w(n)) \in T_{2}$
there exists indices $i_{1}$ and $i_{2}$
such that $a_{i_{1}} = a_{i_{2}}$.
Let $(j,k)$ be the least such pair of indices in the lexicographic order.
First let $\sigma' = \sigma \circ (j,k)$
where $(j,k)$ denotes the transposition.
Second, 
let $w(i)' = w(i)$ for $i \neq j,k$.
Finally, define
$w(j)'$ and $w(k)'$
by switching the second factors
in the factorizations, that is,
$w(j)' = u(j) v(k)$
and
$w(k)' = u(k) v(j)$.
Overall, the function is given by
$\varphi(t) = (\sigma', w(0)', \ldots, w(n)')$

Since $u(j)$ and $u(k)$ have the same number of
left-to-right maxima, it is straightforward to check
that 
$w(j)' = u(j) v(k)$ belongs to
$\rg{s+j+\sigma(k)}{s+\sigma(k)}
=
\rg{s+j+\sigma'(j)}{s+\sigma'(j)}$.
Hence it follows that $\varphi(t) \in T_{2}$.
Let $a_{i}'$ be the number repeated entries in
$w(i)'$ that occur beyond position $s+i$.
Directly, we have $a_{i}' = a_{i}$ and
we obtain that $\varphi$ is an involution.
Finally, we have $(-1)^{\sigma'} = -(-1)^{\sigma}$
implying $\varphi$ is a sign-reversing involution.

Finally, it is direct to see that
$\wt(w(j)) \cdot \wt(w(k)) = \wt(w(j)') \cdot \wt(w(k)')$
using the observation that
$u(j)$ and $u(k)$ have the same number of left-to-right maxima.
Hence the map $\varphi$ is a sign-reversing involution
on $T_{2}$
which preserves the weight
$\wt(w(0)) \cdots \wt(w(n))$.
Thus the determinant is given by
equation~\eqref{equation_determinant_subset1}.
\end{proof}

\section{A pair of identities of Carlitz}
\label{section_another_identity_from_Carlitz}

In this section we turn our attention to two
identities of Carlitz.  Observe that setting
$q=1$ in 
Theorems~\ref{theorem_Carlitz_funny_q_1}
and~\ref{theorem_Carlitz_funny_q_2}
in this section does not yield any information
about the Stirling number $S(n,k)$ of the second kind.

We first prove a theorem from which
Carlitz's Theorem~\ref{theorem_Carlitz_funny_q_1}
will follow.

\begin{theorem}
For two non-negative integers $n$ and $k$
not both equal to $0$,
the following identity holds:
\begin{equation}
(1-q)^{n-k} \cdot S_{q}[n,k]
=
\sum_{j=0}^{n-k}
(-q)^{j} \cdot \binom{n-1}{n-k-j} \cdot \qchoose{j+k-1}{j}{q} .
\label{equation_Stanton_Gould_Carlitz_preliminary}
\end{equation}
\label{theorem_Stanton_Gould_Carlitz_preliminary}
\end{theorem}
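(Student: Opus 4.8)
The plan is to use the expansion \eqref{equation_expansion} together with the generating-function philosophy of Section~\ref{section_generating_function}, but specialized to track the lengths of the ``filler'' words. Recall that every $w \in \rg{n}{k}$ factors uniquely as $w = 1\cdot u_1 \cdot 2 \cdot u_2 \cdots k \cdot u_k$ with $u_i \in [1,i]^{*}$ and $\ell(u_1)+\cdots+\ell(u_k) = n-k$, and that $\wt(w) = \ls(u_1)\cdots\ls(u_k)$ up to the global factor $q^{-\binom{k}{2}}$ already built into the definition; concretely $\sum_{w\in\rg{n}{k}}\wt(w) = S_q[n,k] = h_{n-k}([1]_q,\dots,[k]_q)$. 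So $(1-q)^{n-k}\cdot S_q[n,k]$ is $(1-q)^{n-k}$ times the complete homogeneous symmetric polynomial of degree $n-k$ evaluated at $[1]_q,\dots,[k]_q$. The first step is to introduce a bookkeeping variable $t$ and recognize that $\sum_{N\ge 0} S_q[N+k,k]\, t^{N}$ is, by the argument proving \eqref{equation_generating_function}, exactly $\prod_{i=1}^{k} \frac{1}{1-[i]_q t}$.

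Next I would extract the coefficient of $t^{n-k}$ after substituting $t \mapsto (1-q)t$, so that the quantity on the left-hand side is the coefficient of $t^{n-k}$ in $\prod_{i=1}^{k}\frac{1}{1-(1-q)[i]_q t} = \prod_{i=1}^{k}\frac{1}{1-(1-q^{i})t}$, using $(1-q)[i]_q = 1-q^{i}$. Now the key algebraic manipulation: write $\frac{1}{1-(1-q^{i})t} = \frac{1}{(1-t)+q^{i}t} = \frac{1}{1-t}\cdot\frac{1}{1+\frac{q^{i}t}{1-t}}$, so the whole product becomes $\frac{1}{(1-t)^{k}}\prod_{i=1}^{k}\frac{1}{1+\frac{q^{i}t}{1-t}}$. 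Setting $z = \frac{-qt}{1-t}$, the product $\prod_{i=1}^{k}\frac{1}{1-q^{i-1}z} = \sum_{j\ge 0}\qchoose{j+k-1}{j}{q} z^{j}$ by the standard $q$-binomial theorem (the generating function for $q$-Stirling-free complete homogeneous symmetric functions in a geometric progression, or equivalently $\prod_{i=0}^{k-1}(1-q^{i}z)^{-1} = \sum_j \qchoose{k+j-1}{j}{q} z^j$). Substituting back $z = -qt/(1-t)$ gives
\begin{equation*}
\prod_{i=1}^{k}\frac{1}{1-(1-q^{i})t}
=
\frac{1}{(1-t)^{k}}\sum_{j\ge 0}\qchoose{j+k-1}{j}{q}\,(-q)^{j}\,\frac{t^{j}}{(1-t)^{j}}
=
\sum_{j\ge 0}(-q)^{j}\qchoose{j+k-1}{j}{q}\,\frac{t^{j}}{(1-t)^{k+j}}.
\end{equation*}
Finally, extract the coefficient of $t^{n-k}$: from the $j$-th summand this is $(-q)^{j}\qchoose{j+k-1}{j}{q}$ times $[t^{n-k-j}]\,(1-t)^{-(k+j)} = \binom{n-k-j+k+j-1}{n-k-j} = \binom{n-1}{n-k-j}$. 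Summing over $0 \le j \le n-k$ yields precisely the right-hand side of \eqref{equation_Stanton_Gould_Carlitz_preliminary}.

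The main obstacle, and the point that deserves care in the write-up, is justifying the two generating-function identities at the level of combinatorics on $RG$-words rather than merely as formal power series — i.e.\ giving the ``restricted growth word'' flavor promised by the paper. Specifically, one wants a direct reading of $\frac{t^{j}}{(1-t)^{k+j}}$ as counting an explicit family of pairs (a choice of $j$ ``shifted'' positions contributing the $-q$ factors via a sign-reversing cancellation, and a weak composition of $n-k-j$ into $k+j$ parts), mirroring how \eqref{equation_expansion} decomposes a word. An alternative, fully bijective route is to set up a sign-reversing involution directly on triples (an $RG$-word together with a marked subset of its repeated letters), where an unmatched object forces the letter-values to be constrained exactly as in the $q$-binomial $\qchoose{j+k-1}{j}{q}$; I expect the authors in fact do the latter, since the theorem is stated as a prelude to deriving Carlitz's identity Theorem~\ref{theorem_Carlitz_funny_q_1} via such an involution. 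Either way, once the two product expansions above are in hand the coefficient extraction is routine, so the combinatorial content is entirely in interpreting $(1-q)^{n-k}$ as a weighted inclusion–exclusion over which repeated letters of the word get ``charged'' a factor of $-q$.
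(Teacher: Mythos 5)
Your argument is correct, but it is a genuinely different proof from the one in the paper. You work entirely with formal power series: starting from Gould's generating function \eqref{equation_generating_function} (which the paper has already established via $RG$-words), you substitute $t \mapsto (1-q)t$, use $(1-q)[i]_q = 1-q^i$, rewrite each factor as $\frac{1}{1-t}\cdot\frac{1}{1-q^{i-1}z}$ with $z=-qt/(1-t)$, invoke the $q$-binomial series $\prod_{i=0}^{k-1}(1-q^iz)^{-1}=\sum_j \qchoose{k+j-1}{j}{q}z^j$, and extract the coefficient of $t^{n-k}$. Every step of this is a legitimate identity of formal power series, so — contrary to the worry in your last paragraph — no further combinatorial justification is needed for correctness; the proof is complete as it stands. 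The paper instead does exactly what you conjectured it might: it expands $(1-q)^{n-k}\cdot S_q[n,k]$ as a signed sum over pairs $(u,P)$ with $u\in\rg{n}{k}$ and $P\subseteq\NLRM(u)$, weighted by $(-q)^{|P|}\cdot\wt(u)$, and runs a sign-reversing involution (increment or decrement $u_r$ at the first eligible non-left-to-right-maximum while toggling $r\in P$) whose survivors force $u_r=b(r)$ for $r\in P$ and $u_r=1$ for $r\notin P$, which are then enumerated directly to give $(-q)^{j}\binom{n-1}{n-k-j}\qchoose{j+k-1}{j}{q}$. What your route buys is brevity and mechanical transparency, at the cost of leaning on two prior generating-function facts; what the paper's route buys is a bijective explanation of each factor on the right-hand side (the binomial coefficient as a choice of positions in $[2,n]$, the $q$-binomial as the $\ls$-weight of a weakly increasing word in $[k]^{j}$) and an involution template that is reused almost verbatim in the proof of Theorem~\ref{theorem_Carlitz_funny_q_2}. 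One small remark: the paper excludes $n=k=0$ because its combinatorial argument uses that a non-empty $RG$-word begins with $1$; your generating-function derivation has no such obstruction (it yields $1=1$ at $n=k=0$ under the natural conventions arising from $[t^0](1-t)^0=1$), so your proof in fact covers slightly more than the stated range.
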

\begin{proof} 
For a word $u = u_{1} u_{2} \cdots u_{n}$ in $\rg{n}{k}$
let $\NLRM(u)$ be the set of all positions $r$ such that
the letter $u_{r}$ is not a left-to-right-maxima of the word $u$,
that is, $u_{r} \leq \max(u_{1}, u_{2}, \ldots, u_{r-1})$.
Furthermore, for a position $r \in NLRM(u)$
define the bound $b(r)$ to be
$\max(u_{1}, u_{2}, \ldots, u_{r-1})$.
Note that $b(r)$ is the largest possible value we could change
$u_{r}$ to be so that the resulting word remains an $RG$-word
in $\rg{n}{k}$.

To describe the left-hand side
of~\eqref{equation_Stanton_Gould_Carlitz_preliminary},
consider the set
of pairs $(u,P)$ where $u \in \rg{n}{k}$ and
$P \subseteq \NLRM(u)$.
Define the weight of such a pair $(u,P)$ to be
$(-q)^{|P|} \cdot \wt(u)$.
It is clear that the sum of the weight over all such pairs $(u,P)$
is given by the left-hand side
of~\eqref{equation_Stanton_Gould_Carlitz_preliminary}.

Define a sign-reversing involution as follows.
For the pair $(u,P)$ pick the smallest
position $r$ in $\NLRM(u)$ such that
either $r \in P$ and $u_{r} \leq b(r)-1$
or $r \not\in P$ and $2 \leq u_{r}$.
In the first case,
send~$P$ to $P-\{r\}$
and replace the $r$th letter $u_{r}$ with $u_{r}+1$.
In the second case,
send $P$ to $P \cup \{r\}$
and replace the $r$th letter $u_{r}$ with $u_{r}-1$.
This is a sign-reversing involution which
pairs terms having the same weight, but opposite signs.
Furthermore, the remaining pairs $(u,P)$
satisfy
for all positions $r \in \NLRM(u)$
either 
$r \in P$ and $u_{r} = b(r)$
or
$r \not\in P$ and $u_{r} = 1$.

We now sum the weight of these remaining pairs $(u,P)$.
First select the cardinality $j$ of the set~$P$.
Note that $0 \leq j \leq n-k$
and that it yields a factor of $(-q)^{j}$.
Second, select the positions $r$ of the non-left-right-maxima
such that $r$ will not be in the set $P$ and $u_{r} = 1$.
There will be $n-k-j$ such positions and they can be anywhere
in the interval $[2,n]$, yielding
$\binom{n-1}{n-k-j}$ possibilities.
Third, select a weakly increasing word $z$ of length $j$ with letters
from the set $[k]$. This will be the letters corresponding to
the non-left-to-right-maxima such that their positions belong to set $P$.
The $\ls$-weight of these letters will be
the $q$-binomial coefficient $\qchoose{j+k-1}{j}{q}$.
Fourth, insert into the word $z$ the letters of the left-to-right-maxima.
There is a unique way to do this insertion.
Finally, insert the~$1$'s corresponding to positions not in the set $P$,
which were already been chosen by the binomial coefficient.
\end{proof} 

Note that the above proof fails when $n=k=0$
since we are using that a non-empty $RG$-word must begin with
the letter $1$, whereas the empty $RG$-word does not.

The next identity is due to Carlitz~\cite[equation~(9)]{Carlitz}.
It was stated by Gould~\cite[equation~(3.10)]{Gould}.
As a warning to the reader,
Gould's notation $S_{2}(n,k)$
for the $q$-Stirling number of the second kind
is related to ours by
$S_{2}(n,k) = S_{q}[n+k,n]$.
This identity also appears
in
the paper of
de M\'edicis, Stanton and White~\cite[equation~(3.3)]{de_Medicis_Stanton_White}
using modern notation.

\begin{theorem}[Carlitz, 1933]
\label{theorem_Carlitz_funny_q_1}
For two non-negative integers $n$ and $k$
the following identity holds:
\begin{equation}
(1-q)^{n-k} \cdot S_{q}[n,k]
=
\sum_{j=0}^{n-k}
(-1)^{j} \cdot \binom{n}{k+j} \cdot \qchoose{j+k}{j}{q} .
\label{equation_Stanton_Gould_Carlitz}
\end{equation}
\end{theorem}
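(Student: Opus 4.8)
The plan is to deduce Theorem~\ref{theorem_Carlitz_funny_q_1} directly from Theorem~\ref{theorem_Stanton_Gould_Carlitz_preliminary} by a short Gaussian-binomial manipulation, handling the degenerate case $n=k=0$ separately (there both sides of~\eqref{equation_Stanton_Gould_Carlitz} equal $1$). For $(n,k)\neq(0,0)$, Theorem~\ref{theorem_Stanton_Gould_Carlitz_preliminary} already evaluates the left-hand side of~\eqref{equation_Stanton_Gould_Carlitz}, so it suffices to show that $\sum_{j=0}^{n-k}(-1)^{j}\binom{n}{k+j}\qchoose{j+k}{j}{q}$ equals $\sum_{j=0}^{n-k}(-q)^{j}\binom{n-1}{n-k-j}\qchoose{j+k-1}{j}{q}$.

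First I would rewrite $\binom{n}{k+j}=\binom{n}{n-k-j}$ and apply the ordinary Pascal recurrence $\binom{n}{n-k-j}=\binom{n-1}{n-k-j}+\binom{n-1}{n-k-1-j}$, splitting the first sum into two pieces. In the piece carrying $\binom{n-1}{n-k-1-j}$ I would shift the index $j\mapsto j-1$: the binomial becomes $\binom{n-1}{n-k-j}$, the sign acquires a factor $-1$, and the Gaussian binomial becomes $\qchoose{j+k-1}{j-1}{q}$. The boundary terms created or lost by the shift vanish since $\binom{n-1}{-1}=0$ and $\qchoose{k-1}{-1}{q}=0$, so one may freely sum over all integers $j$. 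Recombining gives $\sum_{j}(-1)^{j}\binom{n-1}{n-k-j}\bigl(\qchoose{j+k}{j}{q}-\qchoose{j+k-1}{j-1}{q}\bigr)$.

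Next I would invoke the $q$-Pascal recurrence in the form $\qchoose{j+k}{j}{q}=\qchoose{j+k-1}{j-1}{q}+q^{j}\qchoose{j+k-1}{j}{q}$, so that the parenthesized difference collapses to $q^{j}\qchoose{j+k-1}{j}{q}$. The sum then reads $\sum_{j}(-q)^{j}\binom{n-1}{n-k-j}\qchoose{j+k-1}{j}{q}$, which is exactly the right-hand side of~\eqref{equation_Stanton_Gould_Carlitz_preliminary}; combined with Theorem~\ref{theorem_Stanton_Gould_Carlitz_preliminary} this yields~\eqref{equation_Stanton_Gould_Carlitz}.

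I do not expect a genuine obstacle here: the only care needed is the bookkeeping at the ends of the summation ranges after reindexing, which is harmless precisely because the offending binomial or Gaussian binomial coefficients are zero there. If one preferred an intrinsically combinatorial argument, one could instead rerun the sign-reversing involution of Theorem~\ref{theorem_Stanton_Gould_Carlitz_preliminary} on pairs $(u,P)$ while allowing position $1$ of $u$ to count as a non-left-to-right maximum, absorbing the forced leading letter $1$ into the $\ls$-weight of the chosen word; this turns $\binom{n-1}{n-k-j}$ into $\binom{n}{k+j}$ and $\qchoose{j+k-1}{j}{q}$ into $\qchoose{j+k}{j}{q}$. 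The algebraic route above, however, is the shorter one.
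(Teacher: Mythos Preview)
Your proposal is correct and follows essentially the same approach as the paper: handle $n=k=0$ separately, then show that the right-hand sides of~\eqref{equation_Stanton_Gould_Carlitz_preliminary} and~\eqref{equation_Stanton_Gould_Carlitz} agree via one application of the ordinary Pascal recurrence and one application of the $q$-Pascal recurrence, with an index shift in between. The only cosmetic difference is the direction of the computation---the paper starts from~\eqref{equation_Stanton_Gould_Carlitz_preliminary} and applies $q$-Pascal first, then ordinary Pascal, whereas you start from~\eqref{equation_Stanton_Gould_Carlitz} and apply them in the opposite order.
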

\begin{proof}
When $n=k=0$ the statement is direct.
It is enough to show that the right-hand sides of
equations~\eqref{equation_Stanton_Gould_Carlitz_preliminary}
and~\eqref{equation_Stanton_Gould_Carlitz} agree.
We have
\begin{align*}
\sum_{j=0}^{n-k}
& 
(-1)^{j} \cdot \binom{n-1}{n-k-j} \cdot q^{j} \cdot \qchoose{j+k-1}{j}{q} \\
& =
\sum_{j=0}^{n-k}
(-1)^{j} \cdot \binom{n-1}{n-k-j} \cdot
\left( \qchoose{j+k}{j}{q} - \qchoose{j+k-1}{j-1}{q} \right) \\
& =
\sum_{j=0}^{n-k}
(-1)^{j} \cdot \binom{n-1}{n-k-j} \cdot
\qchoose{j+k}{j}{q}
-
\sum_{j=1}^{n-k}
(-1)^{j} \cdot \binom{n-1}{n-k-j} \cdot
\qchoose{j+k-1}{j-1}{q} \\
& =
\sum_{j=0}^{n-k}
(-1)^{j} \cdot \binom{n-1}{n-k-j} \cdot
\qchoose{j+k}{j}{q}
-
\sum_{j=0}^{n-k-1}
(-1)^{j+1} \cdot \binom{n-1}{n-k-j-1} \cdot
\qchoose{j+k}{j}{q} \\
& =
\sum_{j=0}^{n-k}
(-1)^{j} \cdot \binom{n}{n-k-j} \cdot
\qchoose{j+k}{j}{q} .
\end{align*}
Here we used the Pascal recursion for the $q$-binomial coefficients
in the first step, shifted $j$ to $j+1$ in the second sum in the third
step, and applied the Pascal recursion for the binomial coefficients
in the last step.
\end{proof}

The next identity is also due to Carlitz; see~\cite[equation~(8)]{Carlitz}.
It is equivalent to the previous identity, but we provide a proof
using $RG$-words.
\begin{theorem}[Carlitz, 1933]
\label{theorem_Carlitz_funny_q_2}
For $n$ and $k$ two non-negative integers the following
identity holds:
$$
\qchoose{n}{k}{q}
=
\sum_{j=k}^{n}
(q-1)^{j-k} \cdot \binom{n}{j} \cdot S_{q}[j,k] .
$$
\end{theorem}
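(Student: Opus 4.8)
The plan is to recognize Theorem~\ref{theorem_Carlitz_funny_q_2} as the formal inverse of Theorem~\ref{theorem_Carlitz_funny_q_1}, and to obtain it by a direct manipulation rather than a new $RG$-word argument. Starting from the identity $(1-q)^{n-k} \cdot S_{q}[n,k] = \sum_{j=0}^{n-k} (-1)^{j} \binom{n}{k+j} \qchoose{j+k}{j}{q}$ of Theorem~\ref{theorem_Carlitz_funny_q_1}, reindex $i=k+j$ so the right-hand side becomes $\sum_{i=k}^{n} (-1)^{i-k} \binom{n}{i} \qchoose{i}{k}{q}$, using $\qchoose{i}{k}{q} = \qchoose{i}{i-k}{q}$. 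Thus Theorem~\ref{theorem_Carlitz_funny_q_1} says $(1-q)^{n-k} S_{q}[n,k] = \sum_{i} (-1)^{i-k}\binom{n}{i}\qchoose{i}{k}{q}$, i.e. the matrices $A_{n,i} = (-1)^{n-i}\binom{n}{i}$ and $B_{i,k}=\qchoose{i}{k}{q}$ and $C_{n,k}=(1-q)^{n-k}S_{q}[n,k]$ satisfy $C = AB$ (up to the bookkeeping of signs). Since $A$ is unitriangular with $A^{-1}_{n,i}=\binom{n}{i}$ (the binomial inversion matrix is, up to sign, its own inverse), we get $B = A^{-1}C$, which upon unwinding reads $\qchoose{n}{k}{q} = \sum_{j=k}^{n} \binom{n}{j}(1-q)^{j-k}S_{q}[j,k] \cdot (-1)^{\text{something}}$; tracking the signs carefully turns $(-1)^{\cdots}(1-q)^{j-k}$ into $(q-1)^{j-k}$, which is exactly the claimed formula.

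Concretely, the steps in order are: (1) rewrite Theorem~\ref{theorem_Carlitz_funny_q_1} in the reindexed form $(1-q)^{n-k}S_{q}[n,k] = \sum_{i=k}^{n}(-1)^{i-k}\binom{n}{i}\qchoose{i}{k}{q}$; (2) substitute this into the right-hand side of the claimed identity, giving a double sum $\sum_{j=k}^{n}(q-1)^{j-k}\binom{n}{j}\sum_{i=k}^{j}(-1)^{i-k}\binom{j}{i}\qchoose{i}{k}{q}(1-q)^{k-j}$ after dividing through by the $(1-q)^{j-k}$ factor coming from $S_{q}[j,k]$; (3) collapse the sign and power bookkeeping so that the inner weight becomes $S_{q}[j,k]$'s actual value and the outer factors merge; (4) interchange the order of summation to sum over $i$ first, pulling out $(-1)^{i-k}\qchoose{i}{k}{q}$ and leaving $\sum_{j=i}^{n}(-1)^{j-i}\binom{n}{j}\binom{j}{i}$; (5) evaluate this inner binomial sum, which by the standard identity $\sum_{j}(-1)^{j-i}\binom{n}{j}\binom{j}{i} = \binom{n}{i}\sum_{j}(-1)^{j-i}\binom{n-i}{j-i} = \binom{n}{i}[n=i]$ vanishes unless $i=n$; (6) conclude that only the $i=n$ term survives, yielding $\qchoose{n}{k}{q}$ exactly.

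The main obstacle will be step~(3) and (5): keeping the powers of $(q-1)$ versus $(1-q)$ and the signs $(-1)^{i-k}$, $(-1)^{j-i}$ straight so that everything cancels cleanly, and correctly invoking the binomial orthogonality $\sum_{j=i}^{n}(-1)^{j-i}\binom{n}{j}\binom{j}{i}=\delta_{i,n}$ (which is just $\binom{n}{i}$ times the fact that $(1-1)^{n-i}=0$ for $n>i$). There is also the mild boundary issue that Theorem~\ref{theorem_Stanton_Gould_Carlitz_preliminary} — hence the reindexed Theorem~\ref{theorem_Carlitz_funny_q_1} — was proved for $(n,k)$ not both zero, so the case $n=k=0$, where both sides equal $1$, should be checked separately at the start. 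Alternatively, if one prefers a self-contained bijective argument paralleling the proof of Theorem~\ref{theorem_Stanton_Gould_Carlitz_preliminary}, one could interpret $\qchoose{n}{k}{q}$ as a sum over weakly increasing words (lattice paths in a $k\times(n-k)$ box) and set up a sign-reversing involution on pairs (word, marked subset) analogous to the $(u,P)$ construction there; but the inversion route above is shorter and uses only results already established in the excerpt, so that is the approach I would present.
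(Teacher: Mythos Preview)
Your approach is correct and takes a genuinely different route from the paper. The paper explicitly acknowledges that Theorem~\ref{theorem_Carlitz_funny_q_2} ``is equivalent to the previous identity,'' but then chooses to give an independent combinatorial proof using $RG$-words: it interprets the right-hand side as a signed, weighted sum over triples $(A,u,P)$ with $A \subseteq [n]$, $u \in \rg{|A|}{k}$, and $P \subseteq \NLRM(u)$, encodes $(A,u)$ as a word $w$ on the alphabet $\{0\} \cup [k]$, and then applies two successive sign-reversing involutions until only the weakly increasing words survive, whose total weight is $\qchoose{n}{k}{q}$.

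Your route --- binomial inversion applied to Theorem~\ref{theorem_Carlitz_funny_q_1} --- is shorter and uses only results already in hand. One small cleanup: rather than ``dividing through by $(1-q)^{j-k}$'' (which tacitly assumes $q \neq 1$), simply multiply the reindexed form of Theorem~\ref{theorem_Carlitz_funny_q_1} (with $n$ replaced by~$j$) by $(-1)^{j-k}$ to get
\[
(q-1)^{j-k} \cdot S_{q}[j,k]
=
\sum_{i=k}^{j} (-1)^{j-i} \binom{j}{i} \qchoose{i}{k}{q},
\]
and then your steps~(4)--(6) go through verbatim as a polynomial identity in $q$, with the inner sum collapsing via $\binom{n}{j}\binom{j}{i} = \binom{n}{i}\binom{n-i}{j-i}$ and $(1-1)^{n-i} = \delta_{n,i}$. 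Also, your boundary worry is unnecessary: Theorem~\ref{theorem_Carlitz_funny_q_1} is stated and proved for all non-negative $n,k$ (the $n=k=0$ case is handled there directly), so no separate check is needed here. What the paper's approach buys, by contrast, is a proof in the spirit of the rest of the paper --- purely via $RG$-word manipulations --- and it is essentially the ``self-contained bijective argument'' you sketch at the end of your plan.
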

\begin{proof}
The right-hand side describes the following collection
of triplets
$(A, u, P)$
where $A$ is a subset of the set $[n]$,
$u$ is a word in
$\rg{|A|}{k}$
and
$P$ is a subset of $\NLRM(u)$.
Define the weight of the triple 
$(A, u, P)$
to be the product $(-1)^{j-k-|P|} \cdot q^{|P|} \cdot \wt(u)$.
To better visualize the pair~$(A,u)$,
define the word $w = w_{1} w_{2} \cdots w_{n}$ of length $n$
with the letters in the set $\{0\} \cup [k]$
as follows.
Write $A$ as the increasing set
$\{a_{1} < a_{2} < \cdots < a_{j}\}$
and let
$w_{a_{r}} = u_{r}$.
The remaining letters of $w$ are set to be~$0$,
that is, if $i \not\in A$ let $w_{i} = 0$.
Note that the word $w$ uniquely encodes the pair~$(A,u)$.
Let $Q$ be the set
$Q = \{a_{r} \: : \: r \in P\}$,
that is, the set $Q$ encodes the subset $P$, where these
non-left-to-right-maxima occur in the longer word $w$.

Similar to the proof of
Theorem~\ref{theorem_Stanton_Gould_Carlitz_preliminary}
we define a sign-reversing involution by
selecting the smallest $r \in \NLRM(w)$
such that
$r \not\in Q$ and $w_{r} \geq 2$,
or
$r \in Q$ and $1 \leq w_{r} \leq b(r)-1$.
In the first case decrease $w_{r}$ by $1$
and join $r$ to the subset $Q$.
In the second case, increase $w_{r}$ by~$1$
and remove~$r$ from $Q$.
The remaining words $w$ satisfy the following:
for a non-left-to-right-maxima $r$
such that $w_{r} \geq 1$
either
$r \not\in Q$ and $w_{r} = 1$ both hold,
or
$r \in Q$ and $w_{r} = b(r)$ hold.

On the remaining pairs $(w,Q)$
define yet again a sign-reversing involution.
Let $i > a_{1}$ be the smallest index $i$
such that 
$w_{i} = 0$,
or
$w_{i} = 1$ and $i \not\in Q$.
This involution replaces $w_{i}$ with $1-w_{i}$.
Note that this involution changes the sign.

The pairs $(w,Q)$ which remain unmatched under
this second involution are 
those where 
the word~$w$ is weakly increasing and the subset
$Q$ consists of all non-left-to-right-maxima $r$
with $w_{r} \geq 1$.
Note that the weight of the pair $(w,Q)$ is the
weight
$q^{|Q|} \cdot q^{\sum_{r \in Q} w_{r}-1} = q^{\sum_{r \in Q} w_{r}}$.
Finally, 
the sum of the weights of these pairs is~$\qchoose{n}{k}{q}$.
\end{proof}

\section{An extension of Mercier's identities}
\label{section_extension_of_Mercier}

We simultaneously generalize two identities of Mercier by
introducing a two parameter identity
involving $q$-Stirling numbers.
The proof of this identity depends on
a different decomposition of  $RG$-words.

Let $([n]_{q})_{k}$ denote the $q$-analogue of the lower factorial,
that is,
$([n]_{q})_{k} = [n]_{q}!/[n-k]_{q}!$.
Alternatively, one can expand it as the product 
$$
([n]_{q})_{k} = [n]_{q} \cdot [n-1]_{q} \cdots [n-k+1]_{q} .
$$
\begin{theorem}
For three non-negative integers $n$, $r$ and $s$ such that
$s < r \leq n$ the following holds:
\begin{align}
\sum_{k=r}^{n}
(-q^{s})^{k-r}
\cdot ([k-s-1]_{q})_{k-r}
\cdot S_{q}[n,k]
& =
\sum_{i=r-1}^{n-1} S_{q}[i,r-1] \cdot [s]_{q}^{n-i-1}  . 
\label{equation_two_parameters}
\end{align}
\label{theorem_two_parameters}
\end{theorem}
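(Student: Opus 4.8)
The plan is to strip the ``small'' part off the bottom of an $RG$-word --- the prefix preceding the first occurrence of the letter $r$ --- which turns \eqref{equation_two_parameters} into a one-parameter identity for complete homogeneous symmetric polynomials, and then to settle that identity by induction. First I would rewrite the coefficient on the left: since $q^{s}\cdot[l-s]_{q}=[l]_{q}-[s]_{q}$ whenever $l\ge s$, distributing the $q^{s(k-r)}$ over the $k-r$ factors of $([k-s-1]_{q})_{k-r}$ gives $(-q^{s})^{k-r}\cdot([k-s-1]_{q})_{k-r}=\prod_{j=r}^{k-1}([s]_{q}-[j]_{q})$, so the left side of \eqref{equation_two_parameters} is $\sum_{k\ge r}\prod_{j=r}^{k-1}([s]_{q}-[j]_{q})\cdot S_{q}[n,k]$. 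Then I would decompose $w\in\rg{n}{k}$ at the first occurrence of $r$, writing $w=x\cdot r\cdot y$ with $x\in\rg{i}{r-1}$ the prefix before that occurrence; the factor $x$ contributes $\wt(x)$, while the suffix $r\cdot y$, expanded as $r\cdot z_{r}\cdot(r+1)\cdot z_{r+1}\cdots k\cdot z_{k}$ with $z_{j}\in[1,j]^{*}$ as in \eqref{equation_expansion}, contributes $\prod_{j=r}^{k}\ls(z_{j})$ once the power-of-$q$ bookkeeping is done (the iterated cancellation $\binom{r-1}{2}+(r-1)=\binom{r}{2}$ that already occurs in the proof of Theorem~\ref{theorem_de_Medicis_Leroux_recurrence}). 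Summing over the $z_{j}$'s of prescribed total length produces a complete homogeneous symmetric polynomial, so that
\[
S_{q}[n,k]=\sum_{i\ge r-1}S_{q}[i,r-1]\cdot h_{(n-i-1)-(k-r)}\bigl([r]_{q},[r+1]_{q},\ldots,[k]_{q}\bigr),
\]
which is also just the splitting of \eqref{equation_complete_symmetric_functions} at the variable $[r]_{q}$. Substituting this, swapping the order of summation, and writing $m=n-i-1$, the whole statement reduces to the kernel identity
\[
\sum_{j\ge 0}(-q^{s})^{j}\cdot([r+j-s-1]_{q})_{j}\cdot h_{m-j}\bigl([r]_{q},[r+1]_{q},\ldots,[r+j]_{q}\bigr)=[s]_{q}^{\,m}\qquad(m\ge 0),
\]
because its right side then contributes the factor $[s]_{q}^{\,n-i-1}$ and we recover the right side of \eqref{equation_two_parameters}.

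I would prove the kernel identity by induction on $m$; the base case $m=0$ is immediate since only $j=0$ survives. For the step, peel the variable $[r]_{q}$ off each polynomial via $h_{L}([r]_{q},\ldots,[r+j]_{q})=\sum_{a\ge 0}[r]_{q}^{\,a}\cdot h_{L-a}([r+1]_{q},\ldots,[r+j]_{q})$. Using $([r+j-s-1]_{q})_{j}=[r-s]_{q}\cdot([r+j-s-1]_{q})_{j-1}$ and reindexing $j\mapsto j-1$, the terms with $j\ge 1$ reassemble into $(-q^{s})\cdot[r-s]_{q}\cdot\sum_{a=0}^{m-1}[r]_{q}^{\,a}\cdot K_{r+1}(m-1-a)$, where $K_{r+1}$ denotes the same sum with $r$ replaced by $r+1$, while the $j=0$ term gives the boundary contribution $[r]_{q}^{\,m}$. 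Applying the inductive hypothesis $K_{r+1}(\ell)=[s]_{q}^{\,\ell}$, the finite geometric sum $\sum_{a=0}^{m-1}[r]_{q}^{\,a}[s]_{q}^{\,m-1-a}=\bigl([r]_{q}^{\,m}-[s]_{q}^{\,m}\bigr)/\bigl([r]_{q}-[s]_{q}\bigr)$ together with $-q^{s}[r-s]_{q}=[s]_{q}-[r]_{q}$ collapses everything to $[s]_{q}^{\,m}$; note that the hypothesis $s<r$ is exactly what legitimizes the geometric sum.

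The step I expect to be the main obstacle is the kernel identity, and within it the index bookkeeping when peeling off $[r]_{q}$ and matching the leftover sum against $K_{r+1}$; everything else is a careful but routine unwinding of the $RG$-word decomposition and the definition of $\wt$. An alternative would be to seek a direct sign-reversing involution on pairs consisting of a word in $\rg{n}{k}$ together with ``tokens'' $a_{j}\in[s+1,j]$ for $r\le j\le k-1$, signed by $(-1)^{k-r}$ and weighted by $\wt(w)\prod_{j}q^{a_{j}-1}$, whose fixed points are the words of $\rg{n}{r}$ all of whose letters after the first $r$ lie in $[1,s]$ --- precisely the words that the right side of \eqref{equation_two_parameters} enumerates. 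This is appealing, but an involution step that changes the number of left-to-right maxima forces a relabeling of the tokens (their allowed ranges depend on the number of maxima), and arranging that relabeling so the map is genuinely an involution looks delicate; this is why I would favor the reduction to the kernel identity above.
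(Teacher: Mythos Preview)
Your argument is correct. The paper actually gives two proofs of this theorem, and yours is genuinely different from both. The paper's primary proof is precisely the sign-reversing involution you sketch at the end and set aside as ``delicate'': it groups the words of $\rg{n}{k}$ into blocks according to the last letter exceeding $s$, and matches a singleton block $\{x\cdot k\cdot y\}$ (with $x\in\rg{i-1}{k-1}$, $y\in[1,s]^{*}$) against the block $\{x\cdot j\cdot y : s+1\le j\le k-1\}\subseteq\rg{n}{k-1}$; the weights cancel exactly as in your $f$-weight heuristic, and the surviving singletons $\{x\cdot r\cdot y\}$ with $x\in\rg{i}{r-1}$ and $y\in[1,s]^{n-i-1}$ give the right-hand side directly. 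So the relabeling issue you were worried about is handled by working with whole blocks rather than individual tokens. The paper's second proof is closer to yours in spirit: it proves the polynomial identity
\[
\sum_{i=0}^{n-r} h_{i}(x_{1},\ldots,x_{r-1})\,x_{s}^{\,n-r-i}
=\sum_{k=r}^{n}\prod_{j=r}^{k-1}(x_{s}-x_{j})\cdot h_{n-k}(x_{1},\ldots,x_{k})
\]
by extracting $[t^{n-r}]$ from $\dfrac{1}{1-x_{s}t}\prod_{j=1}^{r-1}\dfrac{1}{1-x_{j}t}$ in two ways, iterating in~$r$, and then specializing $x_{i}=[i]_{q}$. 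Your route instead first peels off the prefix in $\rg{i}{r-1}$ combinatorially (equivalently, factors $h_{n-k}(x_{1},\ldots,x_{k})$ across the split at $x_{r}$) and reduces to the stripped kernel identity $\sum_{j\ge0}\prod_{l=r}^{r+j-1}(x_{s}-x_{l})\,h_{m-j}(x_{r},\ldots,x_{r+j})=x_{s}^{\,m}$, which you then prove by induction on $m$ via the geometric sum; this is the coefficient of $h_{a}(x_{1},\ldots,x_{r-1})$ in the paper's identity, so the two symmetric-function statements are equivalent but packaged differently. Your reduction buys a cleaner one-parameter kernel at the cost of a slightly more intricate induction (you must invoke the hypothesis at $r+1$, so the induction is on $m$ uniformly over all $r>s$); the paper's generating-function iteration avoids that but carries all variables throughout. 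One small remark: the role of $s<r$ is not only to legitimize the geometric sum (that just needs $r\neq s$) but, more fundamentally, to validate the rewrite $[s]_{q}-[l]_{q}=-q^{s}[l-s]_{q}$ for every $l\ge r$ used in your first step.
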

\begin{proof}
On the set of $RG$-words
$S = \bigcupdotalt_{r \leq k \leq n} \rg{n}{k}$
define a weight function by
$$
f(w) = (-q^{s})^{k-r} \cdot ([k-s-1]_{q})_{k-r} \cdot \wt(w).
$$
Our objective is to evaluate
the sum
$\sum_{w \in S} f(w)$,
which is the left-hand side
of~\eqref{equation_two_parameters}.
We do this in two steps.
First we will partition the set $S$ into blocks
and extend the weight $f$ to a block $B$ by
$f(B) = \sum_{w \in B} f(w)$.
Secondly, on the set of blocks we will define a sign-reversing
involution such that
if two blocks $B$ and $C$ are matched,
their $f$-weights cancel, that is,
$f(B) + f(C) = 0$. Hence the right-hand side 
of~\eqref{equation_two_parameters}
will be equal to
the sum over all the blocks that have not been matched.

We now define an equivalence relation on the set $S$.
The blocks of our partition will be the equivalence classes.
For any integer $k$ where $r \leq k \leq n$,
we say two words $u, v \in \rg{n}{k}$ are equivalent
if there exists an index $i$ such that $s+1 \leq u_{i}, v_{i} \leq k$
and
$u = x \cdot u_{i} \cdot y$,
$v = x \cdot v_{i} \cdot y$
for some word $x \in \rg{i-1}{k}$
and $y$ a word in $[1,s]^{*}$.
Note that when
$s=0$ that $y$ is the empty word~$\epsilon$.
If a word $v \in \rg{n}{k}$ is of the form $v = x \cdot k \cdot y$
for some $x \in \rg{i-1}{k-1}$
and $y \in [1,s]^{*}$,
then this word is not equivalent to any
other words and hence it belongs to a singleton block.
Since for any $RG$-word we can find a decomposition described as
above, this is a partition of the set $\rg{n}{k}$ and hence the set $S$.

Match the singleton block
$B = \{x \cdot k \cdot y\}$
where $x \in \rg{i-1}{k-1}$, $y \in [1,s]^{*}$ and $r < k$
with the block
$$
C = \{x \cdot j \cdot y \: : \: s+1 \leq j \leq k-1\}.
$$
It is straightforward to check that $C \subseteq \rg{n}{k-1} \subseteq S$.
Note that the weight
$\wt(x \cdot j \cdot y)$ factors as
$\wt(x) \cdot q^{j-1} \cdot \ls(y)$.
Moreover, the $f$-weight of the block $C$ satisfies
\begin{align}
\label{equation_Stirling_f_weight}
\nonumber
f(C)
& =
\sum_{j=s+1}^{k-1}
(-q^{s})^{k-r-1} \cdot
([k-s-2]_{q})_{k-r-1}
\cdot \wt(x) \cdot q^{j-1} \cdot \ls(y)
\nonumber \\
& =
(-q^{s})^{k-r-1} \cdot 
([k-s-2]_{q})_{k-r-1}
\cdot \wt(x) \cdot q^{s} \cdot [k-s-1]_{q} \cdot \ls(y)
\nonumber \\
& =
- (-q^{s})^{k-r} \cdot
([k-s-1]_{q})_{k-r}
\cdot \wt(x) \cdot \ls(y)
\nonumber \\
& =
-f(x \cdot k \cdot y).
\end{align}
Hence the weight of the two blocks $B$ and $C$ cancel each other.

It remains to determine the weight
of the unmatched blocks.
Observe that every block of the form
$\{x \cdot j \cdot y : s+1 \leq j \leq k\}$
where $x \in \rg{i}{k}$ and $y \in [1,s]^{*}$ has been matched
by the above construction.
Hence the unmatched blocks are singleton blocks.
Given a word $u \in \rg{n}{k} \subseteq S$,
it has a unique factorization as $u = x \cdot j \cdot y$
where $y \in [1,s]^{*}$ and $s < j$.
If $x$ is a word in $\rg{i}{k}$ then the word $u$
belongs to a block that has been matched.
If $x \in \rg{i}{k-1}$ then
$m = k$ and the word $u$
belongs to a singleton block which has been matched if $k > r$.
Hence the unmatched blocks are of the form
$\{x \cdot r \cdot y\}$ where 
$x \in \rg{i}{r-1}$ and $y \in [1,s]^{n-i-1}$.
The sum of their $f$-weights are
\begin{align*}
\sum_{i=r-1}^{n-1}
\sum_{\substack{x \in \rg{i}{r-1} \\ y \in [1,s]^{n-i-1}}}
f(x \cdot r \cdot y)
& =
\sum_{i=r-1}^{n-1}
\sum_{\substack{x \in \rg{i}{r-1} \\ y \in [1,s]^{n-i-1}}}
\wt(x) \cdot \ls(y) \\
& =
\sum_{i=r-1}^{n-1}
S_{q}[i,r-1] \cdot [s]_{q}^{n-i-1} ,
\end{align*}
which is the right-hand side of the desired identity.
\end{proof}

Setting $(r,s) = (1,0)$
and $(r,s) = (2,1)$ in
Theorem~\ref{theorem_two_parameters}
we obtain two special cases,
both of which are due to
Mercier~\cite[Theorem~2]{Mercier_q_identities}.
The second identity is a $q$-analogue of
a result due to Jordan~\cite[equation~5, page~186]{Jordan}.

\begin{corollary}[Mercier, 1990]
\label{corollary_Mercier_involution}
For $n \geq 2$, the following two identities hold:
\begin{align}
\sum_{k=1}^{n} (-1)^k \cdot [k-1]_{q}! \cdot S_{q}[n,k] 
& = 0, \label{equation_Mercier_1}\\
\sum_{k=2}^{n} (-1)^k \cdot q^{k-2} \cdot [k-2]_{q}! \cdot S_{q}[n,k] 
& = n-1.   \label{equation_Mercier_2}
\end{align}
\end{corollary}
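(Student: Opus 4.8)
The plan is to obtain both identities as direct specializations of Theorem~\ref{theorem_two_parameters}, which already carries out all of the combinatorial work via its block partition and sign-reversing involution; what remains is only to simplify the two sides of~\eqref{equation_two_parameters} for the two choices of parameters and to check that the hypotheses are met.

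First I would set $(r,s) = (1,0)$, which is admissible since $0 < 1 \leq n$ for $n \geq 2$. On the left-hand side of~\eqref{equation_two_parameters} the factor $(-q^{s})^{k-r}$ becomes $(-1)^{k-1}$, and the $q$-lower factorial collapses to $([k-s-1]_{q})_{k-r} = ([k-1]_{q})_{k-1} = [k-1]_{q} \cdot [k-2]_{q} \cdots [1]_{q} = [k-1]_{q}!$, so the left-hand side reads $\sum_{k=1}^{n} (-1)^{k-1} \cdot [k-1]_{q}! \cdot S_{q}[n,k]$. On the right-hand side, $[s]_{q} = [0]_{q} = 0$, so $[0]_{q}^{n-i-1}$ vanishes unless $n-i-1 = 0$, that is, $i = n-1$; the surviving term is $S_{q}[n-1,0] \cdot [0]_{q}^{0} = S_{q}[n-1,0]$, which equals $\delta_{n-1,0}$ by the boundary conditions and hence is $0$ for $n \geq 2$. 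Thus $\sum_{k=1}^{n} (-1)^{k-1} [k-1]_{q}! S_{q}[n,k] = 0$, and multiplying through by $-1$ yields~\eqref{equation_Mercier_1}.

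Next I would set $(r,s) = (2,1)$, admissible since $1 < 2 \leq n$ for $n \geq 2$. Now $(-q^{s})^{k-r} = (-q)^{k-2} = (-1)^{k} q^{k-2}$ and $([k-s-1]_{q})_{k-r} = ([k-2]_{q})_{k-2} = [k-2]_{q}!$, so the left-hand side of~\eqref{equation_two_parameters} becomes $\sum_{k=2}^{n} (-1)^{k} q^{k-2} [k-2]_{q}! S_{q}[n,k]$, matching the left-hand side of~\eqref{equation_Mercier_2}. On the right-hand side, $[s]_{q} = [1]_{q} = 1$, so $[1]_{q}^{n-i-1} = 1$, and $S_{q}[i,1] = 1$ for every $i \geq 1$ because $\rg{i}{1}$ contains only the word $11\cdots 1$, whose weight is $q^{0 - \binom{1}{2}} = 1$. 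Hence the right-hand side equals $\sum_{i=r-1}^{n-1} S_{q}[i,1] \cdot [1]_{q}^{n-i-1} = \sum_{i=1}^{n-1} 1 = n-1$, which proves~\eqref{equation_Mercier_2}.

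Since Theorem~\ref{theorem_two_parameters} is available, there is no substantive obstacle; the only points needing a moment's care are confirming that $s < r \leq n$ holds for both parameter choices exactly when $n \geq 2$, the degenerate evaluation $S_{q}[n-1,0] = \delta_{n-1,0}$ that kills the right-hand side of the first identity, and the elementary simplifications $([k-1]_{q})_{k-1} = [k-1]_{q}!$, $([k-2]_{q})_{k-2} = [k-2]_{q}!$, and $S_{q}[i,1] = 1$.
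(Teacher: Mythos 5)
Your proposal is correct and takes exactly the paper's route: the paper obtains Corollary~\ref{corollary_Mercier_involution} precisely by specializing Theorem~\ref{theorem_two_parameters} at $(r,s)=(1,0)$ and $(r,s)=(2,1)$, leaving the routine simplifications implicit. You have simply written out those simplifications (the collapse of the $q$-lower factorials, the vanishing of the right-hand side via $[0]_q=0$ and $S_q[n-1,0]=\delta_{n-1,0}$, and $S_q[i,1]=1$), all of which check out.
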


\noindent
Mercier's identity~\eqref{equation_Mercier_2} 
reappears in work of Ernst~\cite[Corollary~3.30]{Ernst}.

The next result is the case $r = s+1$ in 
Theorem~\ref{theorem_two_parameters}.
Here we provide a different expression.

\begin{proposition}
The following identity holds:
\begin{multline}
\sum_{k=r}^{n}
(-q^{r-1})^{k-r}
\cdot ([k-r]_{q})_{k-r}
\cdot S_{q}[n,k]
= \\
\sum_{c_{1} + c_{2} \plusdots c_{r-1} = n-r+1}
c_{r-1} \cdot [1]_{q}^{c_{1}} \cdot [2]_{q}^{c_{2}}
\cdots [r-2]_{q}^{c_{r-2}} \cdot [r-1]_{q}^{c_{r-1}-1} . 
\label{equation_long}
\end{multline}
\label{proposition_long}
\end{proposition}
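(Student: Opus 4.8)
The plan is to start from Theorem~\ref{theorem_two_parameters} specialized at $r = s+1$ and rework the right-hand side into the claimed compositional sum. With $s = r-1$, the left-hand side of~\eqref{equation_two_parameters} becomes exactly the left-hand side of~\eqref{equation_long}, since $([k-s-1]_q)_{k-r} = ([k-r]_q)_{k-r}$ and $(-q^s)^{k-r} = (-q^{r-1})^{k-r}$. So the only work is to show that the right-hand side of~\eqref{equation_two_parameters}, namely $\sum_{i=r-1}^{n-1} S_q[i,r-1]\cdot[r-1]_q^{\,n-i-1}$, equals the right-hand side of~\eqref{equation_long}.

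First I would expand $S_q[i,r-1]$ using the generating-function / homogeneous-symmetric-function description. By~\eqref{equation_complete_symmetric_functions} and the expansion~\eqref{equation_expansion}, a word $x \in \rg{i}{r-1}$ factors as $x = 1\cdot u_1\cdot 2\cdot u_2\cdots (r-1)\cdot u_{r-1}$ with $u_\ell \in [1,\ell]^*$ and $\sum_\ell \ell(u_\ell) = i-(r-1)$, and $\wt(x) = \prod_\ell \ls(u_\ell)$. Writing $d_\ell = \ell(u_\ell)$, the contribution of all such $x$ is $\prod_{\ell=1}^{r-1} [\ell]_q^{\,d_\ell}$ summed over $d_1 + \cdots + d_{r-1} = i - r + 1$. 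Then the tail factor $[r-1]_q^{\,n-i-1}$ should be absorbed by thinking of it as a final block of $n-i-1$ extra letters each ranging over $[1,r-1]$; concatenating these onto the last run of $x$, I would set $c_\ell = d_\ell$ for $\ell < r-1$ and $c_{r-1} = d_{r-1} + (n-i-1) + 1$ — the $+1$ accounting for the mandatory letter $r-1$ itself — so that $c_1 + \cdots + c_{r-1} = (i-r+1) + (n-i-1) + 1 = n-r+1$, matching the constraint in~\eqref{equation_long}. The one subtlety is the factor $c_{r-1}$ on the right of~\eqref{equation_long} together with the shift $[r-1]_q^{\,c_{r-1}-1}$: I would interpret $c_{r-1}$ as the number of ways to choose which of the $c_{r-1}$ positions in the final $(r-1)$-run is the distinguished "boundary" position separating the part of the run that came from $u_{r-1}$ (contributing to $\wt(x)$, weighted by $[r-1]_q$) from the part that came from the tail $[r-1]_q^{n-i-1}$ — equivalently, summing over $i$ from $r-1$ to $n-1$ is the same as summing over the position of that boundary within a run of fixed total length $c_{r-1}$, and there are $c_{r-1}$ choices. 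The $\ls$-weight of a length-$c_{r-1}$ word over $[1,r-1]$ is $[r-1]_q^{\,c_{r-1}}$, but one letter of that run is the left-to-right maximum $r-1$ which is already counted; removing it gives the $[r-1]_q^{\,c_{r-1}-1}$ in~\eqref{equation_long}. Carrying out this bookkeeping carefully, summing over all $(c_1,\dots,c_{r-1})$ with $\sum c_\ell = n-r+1$ and over the $c_{r-1}$ choices of boundary, reproduces $\sum_i S_q[i,r-1][r-1]_q^{n-i-1}$ exactly.

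An alternative, cleaner route I would consider is purely generating-functional: by Gould's formula (Theorem in Section~\ref{section_generating_function}), $\sum_{i\ge r-1} S_q[i,r-1]\, t^{\,i-(r-1)} = \prod_{\ell=1}^{r-1} 1/(1-[\ell]_q t)$, so $\sum_{i=r-1}^{n-1} S_q[i,r-1][r-1]_q^{\,n-i-1}$ is the coefficient of $t^{\,n-r}$ in $[r-1]_q^{\,-1}\cdot\big(1/(1-[r-1]_q t)\big)\cdot\prod_{\ell=1}^{r-1} 1/(1-[\ell]_q t) = [r-1]_q^{\,-1}\prod_{\ell=1}^{r-2} \frac{1}{1-[\ell]_q t}\cdot\frac{1}{(1-[r-1]_q t)^2}$. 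Expanding $1/(1-[r-1]_q t)^2 = \sum_{c\ge 0} (c+1)[r-1]_q^{\,c} t^{\,c}$ and each $1/(1-[\ell]_q t) = \sum_{c\ge0}[\ell]_q^{\,c} t^{\,c}$, the coefficient of $t^{\,n-r}$ is $\sum_{c_1+\cdots+c_{r-1}=n-r} (c_{r-1}+1)[1]_q^{c_1}\cdots[r-2]_q^{c_{r-2}}[r-1]_q^{c_{r-1}}/[r-1]_q$; reindexing the last part $c_{r-1} \mapsto c_{r-1}-1$ turns this into $\sum_{c_1+\cdots+c_{r-1}=n-r+1} c_{r-1}[1]_q^{c_1}\cdots[r-2]_q^{c_{r-2}}[r-1]_q^{c_{r-1}-1}$, which is precisely the right-hand side of~\eqref{equation_long}.

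\textbf{Main obstacle.} The routine algebra is unproblematic; the delicate point is the appearance of the linear factor $c_{r-1}$ (rather than a $q$-integer) in~\eqref{equation_long}, which signals a "double pole" at $[r-1]_q$ in the generating function — equivalently, in the bijective version, a free choice of a boundary marker within the last run. I would make sure the reindexing $c_{r-1}\mapsto c_{r-1}-1$ and the range bookkeeping ($i$ from $r-1$ to $n-1$ versus $c$ from $0$ to $n-r$) line up, and that the case $r-1 = 1$ (where the product $\prod_{\ell=1}^{r-2}$ is empty and $[1]_q = 1$) degenerates correctly. Both approaches work; I would present the generating-function argument as the main proof since it is shortest, and remark that it also follows from the combinatorial decomposition underlying Theorem~\ref{theorem_two_parameters}.
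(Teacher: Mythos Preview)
Your combinatorial sketch is essentially the paper's own argument: the paper revisits the unmatched words $u = x\cdot r\cdot y$ from the proof of Theorem~\ref{theorem_two_parameters}, factors $x$ via~\eqref{equation_expansion} as $1\cdot x_1\cdots(r-1)\cdot x_{r-1}$, sets $c_i = \ell(x_i)$ for $i \le r-2$, and takes $c_{r-1}$ to be the total length of $x_{r-1}\cdot r\cdot y$, so that the $c_{r-1}$ possible positions for the letter~$r$ in that block produce the linear factor. (Your ``$+1$ for the mandatory letter $r-1$'' is really the slot for the letter~$r$, but the arithmetic is unaffected.)

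Your generating-function route is a genuinely different and valid alternative that the paper does not give for this proposition. One slip to clean up: the factor $[r-1]_q^{-1}$ you insert is spurious. The convolution $\sum_{i=r-1}^{n-1} S_q[i,r-1]\,[r-1]_q^{\,n-1-i}$ is exactly the coefficient of $t^{\,n-r}$ in $\prod_{\ell=1}^{r-2}(1-[\ell]_q t)^{-1}\cdot(1-[r-1]_q t)^{-2}$, with no extra scalar in front; you then silently drop the stray $[r-1]_q^{-1}$ during the reindexing $c_{r-1}\mapsto c_{r-1}-1$, so your final expression is correct but the intermediate line is off as written. Once corrected, this is indeed the shortest proof. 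The paper's combinatorial version has the virtue of staying entirely within the $RG$-word framework and making the factor $c_{r-1}$ visible as a literal position choice rather than as a double-pole coefficient.
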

\begin{proof}
The $f$-weights of the unmatched words $u = x \cdot r \cdot y$
in the proof of Theorem~\ref{theorem_two_parameters}
can be determined in a different manner.
Since $x \in \rg{i}{r-1}$ we can factor $x$
according to equation \eqref{equation_expansion}.
Hence the unmatched word $u$
has the form
$$ u = 1 \cdot x_{1} \cdot 2 \cdot x_{2} \cdot 3
               \cdots (r-1) \cdot x_{r-1} \cdot r \cdot y, $$
where $x_{i}$ belongs to $[1,i]^{*}$
and
$y$ belongs to $[1,r-1]^{*}$.
All possible words $x_{i}$ of length~$c_{i}$
give total weight of $[i]_{q}^{c_{i}}$ for $i \leq r-2$.
For the word $x_{r-1} \cdot r \cdot y$,
suppose its length is $c_{r-1}$.
Then we have $c_{r-1}$ choices to place the letter~$r$ 
and the total weight of such words of the form
$x_{r-1} \cdot y$ will be $[r-1]_{q}^{c_{r-1}-1}$.
Hence the $f$-weight for unmatched words
is given by
equation~\eqref{equation_long}.
\end{proof}

Recall the Stirling numbers of the second kind are specializations of
the homogeneous symmetric function;
see equation~\eqref{equation_complete_symmetric_functions}.
Thus one can view
Theorem~\ref{theorem_two_parameters} from a symmetric function
perspective.

\begin{theorem}
The following polynomial identity holds:
\begin{align*}
\sum_{i=0}^{n-r} h_{i}(x_{1}, x_{2},\ldots, x_{r-1})\cdot x_{s}^{n-r-i}
=
& 
\sum_{k=r}^{n}
(x_{s}-x_{r})\cdot (x_{s}-x_{r+1})\cdots (x_{s}-x_{k-1})
\cdot h_{n-k}(x_{1}, x_{2}, \ldots, x_{k}).
\end{align*}
\label{theorem_x}
\end{theorem}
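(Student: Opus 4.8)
\emph{Proof strategy.}  The plan is to read both sides of the identity off one and the same rational generating function in an auxiliary variable $t$, and then to establish that generating function identity by a telescoping argument; this route is algebraic, bypassing the $RG$-word bijections used elsewhere in the paper.

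First I would encode the left-hand side as a coefficient.  From $\sum_{i\ge 0} h_i(x_1,\dots,x_{r-1})\,t^i = \prod_{j=1}^{r-1}(1-x_jt)^{-1}$ and $\sum_{p\ge 0} x_s^{p}\,t^{p} = (1-x_st)^{-1}$, the Cauchy product gives
\[
\sum_{i=0}^{n-r} h_i(x_1,\dots,x_{r-1})\, x_s^{\,n-r-i}
=
[t^{n-r}]\;\frac{1}{(1-x_st)\prod_{j=1}^{r-1}(1-x_jt)} .
\]
For the right-hand side, using $h_{n-k}(x_1,\dots,x_k) = [t^{n-k}]\prod_{i=1}^{k}(1-x_it)^{-1}$, the $k$-th summand equals $[t^{n-r}]$ of $t^{k-r}\prod_{j=r}^{k-1}(x_s-x_j)/\prod_{i=1}^{k}(1-x_it)$, and the terms with $k>n$ can be included at no cost since they have $t$-valuation $k-r>n-r$.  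Hence the right-hand side equals $[t^{n-r}]$ of the formal power series
\[
\sum_{k\ge r} t^{k-r}\,\frac{\prod_{j=r}^{k-1}(x_s-x_j)}{\prod_{i=1}^{k}(1-x_it)} .
\]

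It therefore suffices to prove that this series equals $\bigl((1-x_st)\prod_{j=1}^{r-1}(1-x_jt)\bigr)^{-1}$.  Cancelling the common factor $\prod_{j=1}^{r-1}(1-x_jt)^{-1}$ and reindexing $m=k-r$, this becomes the special case $b=x_s$, $a_\ell=x_{r+\ell}$ of the formal identity
\[
\sum_{m\ge 0} t^{m}\,\frac{\prod_{\ell=0}^{m-1}(b-a_\ell)}{\prod_{\ell=0}^{m}(1-a_\ell t)} = \frac{1}{1-bt},
\]
valid for arbitrary $b,a_0,a_1,\dots$.  I would prove this by induction on $M$ for the truncated form
\[
\sum_{m=0}^{M} t^{m}\,\frac{\prod_{\ell=0}^{m-1}(b-a_\ell)}{\prod_{\ell=0}^{m}(1-a_\ell t)}
= \frac{1}{1-bt} - \frac{t^{M+1}\prod_{\ell=0}^{M}(b-a_\ell)}{(1-bt)\prod_{\ell=0}^{M}(1-a_\ell t)} ,
\]
the base case $M=0$ being the identity $(1-a_0t)-t(b-a_0)=1-bt$ and the inductive step reducing, after passing to a common denominator, to $(1-bt)-(1-a_{M+1}t)=t(a_{M+1}-b)$.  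Since the remainder term has $t$-valuation at least $M+1$, letting $M\to\infty$ yields the identity, and extracting $[t^{n-r}]$ proves the theorem.  As a consistency check, substituting $x_i=[i]_q$ and using $[s]_q-[j]_q=-q^{s}[j-s]_q$ together with $S_q[n,k]=h_{n-k}([1]_q,\dots,[k]_q)$ recovers Theorem~\ref{theorem_two_parameters}.

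All of the computations involved are short.  I expect the only points requiring care to be the two truncation steps—checking that the sums over $i$ and over $k$ may be taken to be infinite without changing the coefficient of $t^{n-r}$—and recognizing that after removing the factor $\prod_{j=1}^{r-1}(1-x_jt)$ the remaining series telescopes; the induction proving the telescoping lemma is then routine.  (A purely combinatorial proof lifting the argument of Theorem~\ref{theorem_two_parameters} seems harder, because the prefactor $\prod_{j=r}^{k-1}(x_s-x_j)$ is a signed polynomial rather than a monomial and does not survive the obvious $RG$-word weighting.)
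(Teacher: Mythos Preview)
Your proof is correct and follows essentially the same route as the paper: both read the two sides as the coefficient of $t^{n-r}$ in the generating function $G_{r}(t)=\bigl((1-x_{s}t)\prod_{j=1}^{r-1}(1-x_{j}t)\bigr)^{-1}$ and then use the telescoping identity $(1-a t)-(1-b t)=t(b-a)$. The only cosmetic difference is that the paper iterates the recursion $[t^{n-r}]G_{r}(t)=h_{n-r}(x_{1},\dots,x_{r})+(x_{s}-x_{r})\,[t^{n-r-1}]G_{r+1}(t)$ a finite number of times, whereas you first isolate and prove the general power-series lemma $\sum_{m\ge 0}t^{m}\prod_{\ell<m}(b-a_{\ell})/\prod_{\ell\le m}(1-a_{\ell}t)=(1-bt)^{-1}$ and then extract coefficients.
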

\begin{proof}
Let $[t^{n}]f(t)$ denote the coefficient of $t^{n}$ in $f(t)$.
We consider the function
$$
G_{r}(t)
=
\frac{1}{1-x_{s}\cdot t} \cdot \prod_{j=1}^{r-1} \frac{1}{1-x_{j}\cdot t}
=
\prod_{j=1}^{r} \frac{1}{1-x_{j}\cdot t}
+
\frac{(x_{s}-x_{r})\cdot t}{1-x_{s}\cdot t}
\cdot
\prod_{j=1}^{r} \frac{1}{1-x_{j}\cdot t},
$$
and compute the coefficient of $t^{n-r}$ in two ways.
Using the first expression of $G_{r}(t)$
and that
$$
\frac{1}{1-x_{s}\cdot t}
=
\sum_{i\geq 0} x_{s}^{i} \cdot t^{i}
\;\; \text{ and } \;\;
\prod_{j=1}^{r-1} \frac{1}{1-x_{j}\cdot t}
= 
\sum_{i \geq 0} h_{i}(x_{1}, x_{2},\ldots, x_{r-1})\cdot t^{i},
$$
we obtain
\begin{equation}
[t^{n-r}]G_{r}(t)
=
\sum_{i=0}^{n-r} h_{j}(x_{1}, x_{2}, \ldots, x_{r-1})\cdot x_{s}^{n-r-i}.
\label{equation_symetric_one}
\end{equation}
Using the second expression of $G_{r}(t)$ we have
\begin{align}
\nonumber
[t^{n-r}]G_{r}(t)
&=
[t^{n-r}] \prod_{j=1}^{r} \frac{1}{1-x_{j} \cdot t}
+
[t^{n-r}] \frac{(x_{s}-x_{r})\cdot t}{1-x_{s}\cdot t}
\cdot
\prod_{j=1}^{r} \frac{1}{1-x_{j}\cdot t} \\
&=
h_{n-r}(x_{1}, x_{2}, \ldots, x_{r})
+
(x_{s}-x_{r}) \cdot [t^{n-r-1}] G_{r+1}(t) .
\label{equation_symmetric_iteration_one}
\end{align}
Iterate equation~\eqref{equation_symmetric_iteration_one}
$n-r$ times yields
\begin{equation}
\label{equation_symmetric_two}
[t^{n-r}]G_{r}(t)
=
\sum_{k=r}^{n}
(x_{s}-x_{r})\cdot (x_{s}-x_{r+1}) \cdots (x_{s}-x_{k-1})
\cdot h_{n-k}(x_{1}, x_{2}, \ldots, x_{k}).
\end{equation}
Now combine equations~\eqref{equation_symetric_one}
and~\eqref{equation_symmetric_two} we obtain the desired identity.
\end{proof}

\begin{proof}
[Second proof of Theorem~\ref{theorem_two_parameters}.]
Substituting $x_{i} = [i]_{q}$
in Theorem~\ref{theorem_x} yields the result
using that $[s]_q - [i]_q = -q^{s} \cdot [i-s]_q$ when $i > s \geq 0$.
\end{proof}

\section*{Acknowledgements}

The authors thank Dennis Stanton for directing us to the
pair of identities of Carlitz discussed in
Section~\ref{section_another_identity_from_Carlitz}.
The first author thanks the Simons Foundation
(grant \#206001)
for partially supporting two research visits
to Princeton University
to work with the second and third author.
The second author was partially supported by
NSA grant H98230-13-1-0280.  
This work was partially supported by grants
from the Simons Foundation
(\#429370 to Richard Ehrenborg; 
\#206001 and \#422467 to Margaret Readdy).
The authors would also like to thank the Princeton University
Mathematics Department for its hospitality
and support.

\newcommand{\arxiv}[3]{{\sc #1,} #2, {\tt #3}.}
\newcommand{\journal}[6]{{\sc #1,} #2, {\it #3} {\bf #4} (#5), #6.}
\newcommand{\book}[4]{{\sc #1,} ``#2,'' #3, #4.}
\newcommand{\bookf}[5]{{\sc #1,} ``#2,'' #3, #4, #5.}
\newcommand{\books}[6]{{\sc #1,} ``#2,'' #3, #4, #5, #6.}
\newcommand{\collection}[6]{{\sc #1,}  #2, #3, in {\it #4}, #5, #6.}
\newcommand{\thesis}[4]{{\sc #1,} ``#2,'' Doctoral dissertation, #3, #4.}
\newcommand{\springer}[4]{{\sc #1,} ``#2,'' Lecture Notes in Math.,
                          Vol.\ #3, Springer-Verlag, Berlin, #4.}
\newcommand{\preprint}[3]{{\sc #1,} #2, preprint #3.}
\newcommand{\preparation}[2]{{\sc #1,} #2, in preparation.}
\newcommand{\appear}[3]{{\sc #1,} #2, to appear in {\it #3}}
\newcommand{\submitted}[3]{{\sc #1,} #2, submitted to {\it #3}}
\newcommand{\JCTA}{J.\ Combin.\ Theory Ser.\ A}
\newcommand{\AdvancesinMathematics}{Adv.\ Math.}
\newcommand{\JournalofAlgebraicCombinatorics}{J.\ Algebraic Combin.}

\newcommand{\communication}[1]{{\sc #1,} personal communication.}


{\small

}

\bigskip

\noindent
{\em Y.\ Cai,
Department of Mathematics,
Texas A\&M University,
College Station, TX 77843-3368,}
{\tt ycai@math.tamu.edu}.

\noindent
{\em R.\ Ehrenborg and M.\ Readdy,
Department of Mathematics,
University of Kentucky,
Lexington, KY 40506-0027,}
{\tt {richard.ehrenborg, margaret.readdy}@uky.edu}.


\begin{thebibliography}{99}



\bibitem{Cai_Readdy}
\journal{Y.\ Cai and M.\ Readdy}
        {$q$-Stirling numbers:  A new view}
       {Adv. in Appl.\ Math.}{86}{2017}{50--80}


\bibitem{Carlitz}
\journal{L.\ Carlitz}
        {On abelian fields}
        {Trans.\ Amer.\ Math.\ Soc.}
        {35}{1933}{122--136}

\bibitem{Carlitz_identity}
\journal{L.\ Carlitz}
		{$q$-Bernoulli numbers and polynomials}
		{Duke Math.\ J.}
		{15}{1948}{987--1000}

\bibitem{Chen}
\journal{W.\ Y.\ C.\ Chen}
		{Context-free grammars, differential operators and formal power series}
		{Theoret.\ Comput.\ Sci.}
		{117}{1993}{113--129}




\bibitem{Comtet}
\book{L.\ Comtet}
	 {Advanced Combinatorics}
	 {The art of finite and infinite expansions.
	  Revised and enlarged edition, D. Reidel Publishing Company}
	  {1974}

\bibitem{de_Medicis_Leroux_unified}
\journal{A.\ de M\'edicis and P.\ Leroux}
        {A unified combinatorial approach for $q$- (and $p,q$-)
         Stirling numbers}
        {J.\ Statist.\ Plann.\ Inference}
        {34}{1993}{89--105}


\bibitem{de_Medicis_Leroux_convolution}
\journal{A.\ de M\'edicis and P.\ Leroux}
		{Generalized Stirling numbers, convolution formulae and $p,q$-analogues}
		{Canad.\ J.\ Math.}{47}{1995}{474--499}





\bibitem{de_Medicis_Stanton_White}
\journal{A.\ de M\'edicis, D.\ Stanton and D.\ White}
            {The combinatorics of $q$-Charlier polynomials}
            {J.\ Combin.\ Theory Ser. A}
            {69}{1995}{87--114}



\bibitem{Ehrenborg_determinants}
\journal{R.\ Ehrenborg}
		{Determinants involving $q$-Stirling numbers}
		{Adv.\ in Appl.\ Math.}
		{31}{2003}{630--642}


\bibitem{Ehrenborg_Readdy_juggling}
\journal{R.\ Ehrenborg and M.\ Readdy}
        {Juggling and applications to $q$-analogues}
        {Discrete Math.}
        {157}{1996}{107--125}



\bibitem{Ernst}
\journal{T.\ Ernst}
        {$q$-Stirling numbers, an umbral approach}
        {Adv.\ Dyn.\ Syst.\ Appl}
        {3}{2008}{251-–28}

 


\bibitem{Garsia_Remmel}
\journal{A.\ M.\ Garsia and J.\ B.\ Remmel}
        {Q-counting rook configurations and a formula of Frobenius}
        {\JCTA}
        {41}{1986}{246--275}




\bibitem{Gould}
\journal{H.\ W.\ Gould}
        {The $q$-Stirling numbers of the first and second kinds}
        {Duke Math.\ J.}
        {28}{1961}{281--289}




\vanish{
\bibitem{Hutchinson}
\journal{G.\ Hutchinson}
        {Partitioning algorithms for finite sets}
        {Comm.\ ACM}
        {6}{1963}{613--614}
}


\bibitem{Jordan}
\book{C.\ Jordan}
     {Calculus of Finite Differences.  Second edition}
     {Chelsea Publishing Company, New York}{1947}


\bibitem{Kaplansky_Riordan}
\journal{I.\ Kaplansky and J.\ Riordan}
        {The problem of the rooks and its applications}
        {Duke Math.\ J}
        {13}{1946}{259--268}


\bibitem{Leroux}
\journal{P.\ Leroux}
        {Reduced matrices and $q$-log-concavity properties
        of $q$-Stirling Numbers}
        {\JCTA}{54}{1990}{64--84}


\bibitem{Macdonald}
\book{I.\ G.\ Macdonald}
     {Symmetric Functions and Hall Polynomials.  Second edition}
     {Oxford Mathematical Monographs. Oxford Science Publications. 
     The Clarendon Press, Oxford University Press, New York}
     {1995}



\bibitem{Mercier_q_identities}
\journal{A.\ Mercier}
	   {Quelques identit\'{e}s pour les $q$-analogues}
	   {Utilitas Math.}
	   {38}{1990}{33--41}
	



\bibitem{Milne_restricted}
\journal{S.\ Milne}
        {Restricted growth functions
         and incidence relations of the lattice of partitions
         of an $n$-set}
        {Adv.\ Math.}
        {26}{1977}{290--305}




\bibitem{Milne_q_analog}
\journal{S.\ Milne}
        {A $q$-analog of restricted growth functions, Dobinski's equality, and Charlier polynomials}
        {Trans.\ Amer.\ Math.\ Soc.}
        {245}{1978}{89--118}



\bibitem{Milne_restricted_rank_row}
\journal{S.\ Milne}
        {Restricted growth functions, rank row matchings
         of partition lattices, and $q$-Stirling numbers}
        {Adv.\ in Math}
        {43}{1982}{173--196}



\vanish{
\bibitem{Netto}
\book{E.\ Netto}
     {Lehrbuch der Combinatorik}
     {Chelsea, New York}{1901}
}



\bibitem{Rota}
\journal{G.-C.\ Rota}
        {The number of partitions of a set}
        {Amer.\ Math.\ Monthly}
        {71}{1964}{498--504}



\bibitem{Stanley_EC_I}
\book{R.\ P.\ Stanley}
     {Enumerative Combinatorics, Vol. I, 2nd edition}
     {Cambridge Studies in Advanced Mathematics (No.\ 49), Cambridge University Press}
     {2012}


\bibitem{Stirling}
        {\sc J.\ Stirling},
        ``Methodus Differentialis:  sive Tractatus de Summatione et
        Interpolatione Serierum Infinitarum,'' Londini, 1730.


\bibitem{Verde-Star}
\journal{L.\ Verde-Star}
		{Interpolation and combinatorial functions}
		{Stud.\ Appl.\ Math.}{79}{1988}{65--92}


\bibitem{Verde-Star_transformation}
\journal{L.\ Verde-Star}
        {Transformation formulas for complete symmetric polynomials
         and identities for generalized binomial coefficients 
         and $q$-Stirling numbers}
        {Discrete Math}{312}{2012}{2197--2202}



\bibitem{Wachs_White}
\journal{M.\ Wachs and D.\ White}
        {$p$, $q$-Stirling numbers and set partition statistics}
        {\JCTA} {56}{1991}{27--46}



\bibitem{White}
\journal{D.\ White}
        {Interpolating set partition statistics}
        {\JCTA} {68}{1994}{262--295}


\end{thebibliography}
\end{document}